\documentclass[a4paper,11pt,reqno]{amsart}
\usepackage{a4wide,amssymb,latexsym}
\usepackage{eucal}

\usepackage{amsmath}
\usepackage{amsthm}
\usepackage{amstext}
\usepackage{amsfonts}
\usepackage{dsfont}
\usepackage{amssymb}
\usepackage{graphicx}
\usepackage{epsfig}

\usepackage[usenames,dvipsnames]{pstricks}
\usepackage{epsfig}
\usepackage{pst-grad} 
\usepackage{pst-plot} 
\setlength{\textwidth}{16cm}
\setlength{\textheight}{20 cm}
\addtolength{\oddsidemargin}{-1.5cm}
\addtolength{\evensidemargin}{-1.5cm}
\numberwithin{equation}{section}

\newcommand{\R}{\mathbb R}

\newcommand{\N}{\mathbb N}

\newcommand{\be}{\begin{equation}}
\newcommand{\ee}{\end{equation}}
\newcommand{\ba}{\begin{eqnarray}}
\newcommand{\ea}{\end{eqnarray}}

\newcommand{\cI}{\mathcal I}

\newcommand{\cN}{\mathcal N}

\newcommand{\cT}{\mathcal T}





\def\R{\mathbb{R}}
\def\N{\mathbb{N}}
\def\Cc{\mathcal{C}}

\def\tu{\tilde{u}}
\def\tv{\tilde{v}}
\def\ot{(0,T)\times (0,1)}
\def\Ot{[0,T]\times [0,1]}
\def\Lip{\mathrm{Lip}}
\def\ra{\rightarrow}
\newcommand{\COI}{\ensuremath{\mathcal{C}^0([0,T];\mathcal{C}^1([0,1]))}}

\newtheorem{prop}{Proposition}[section]
\newtheorem{thm}{Theorem}
\newtheorem{defi}{Definition}
\newtheorem{lem}{Lemma}
\newtheorem{Rk}{Remark}
\newtheorem{Co}{Corollary}


\begin{document}

\title{Finite-time stabilization of systems of conservation laws on networks}

\author{Vincent Perrollaz}
\address{Laboratoire de Math\'ematiques et Physique Th\'eorique, 
Universit\'e de Tours,
UFR Sciences et Techniques,
Parc de Grandmont,
37200 Tours, France}
\email{Vincent.Perrollaz@lmpt.univ-tours.fr}

\author{Lionel Rosier}
\address{Institut Elie Cartan, UMR 7502 UdL/CNRS/INRIA,
B.P. 70239, 54506 Vand\oe uvre-l\`es-Nancy Cedex, France}
\email{Lionel.Rosier@univ-lorraine.fr}

\keywords{Finite-time stability; stabilization;  hyperbolic systems, shallow water equations; water management, network} 

\subjclass{35L50,35L60,76B75,93D15}

\begin{abstract} 
We investigate the finite-time boundary stabilization of a 1-D first order quasilinear hyperbolic system of diagonal form on [0,1]. The dynamics of both
boundary controls are governed by a finite-time stable ODE. The solutions of the closed-loop system issuing from small initial data in Lip([0,1]) are shown to exist for all times and 
to reach the null equilibrium state in finite time. When only one boundary feedback law is available, a finite-time stabilization is shown to occur 
roughly in a twice longer time. The above feedback strategy is then applied to the Saint-Venant system for the regulation of water flows in a network of canals. 
\end{abstract}

\maketitle
\section{Introduction}
  
Solutions of certain asymptotically stable ODE may reach the equilibrium state in finite time. This phenomenon, which is common when using 
feedback laws that are not Lipschitz continuous, was termed {\em finite-time stability} in \cite{BB} and investigated in that paper. 

A finite-time stabilizer is a feedback control for which the closed-loop system is finite-time stable around some equilibrium. 
In some sense, it satisfies a controllability  objective with a control in feedback form. On the other hand, a finite-time stabilizer may 
be seen as an exponential stabilizer
yielding an arbitrarily large decay rate for the solutions to the closed-loop system. This explains why 
some efforts were made in the last decade to construct finite-time stabilizers for controllable systems, including  the linear ones. 
See  \cite{MP1,MP2} for some recent developments
and up-to-date references, and \cite{BR} for some connections with Lyapunov theory. 

For PDEs, the relationship between exact controllability and rapid stabilization was investigated in \cite{slemrod,komornik,komornik-book}. (See also
\cite{LRZ} for the rapid semiglobal stabilization of the Korteweg-de Vries equation using a time-varying feedback law.)
 
To the best knowledge of the authors, the analysis of the finite-time stabilization of PDE is not developed yet. However, the phenomenon of finite-time extinction 
exists naturally for certain nonlinear evolution equations (see \cite{QW,diaz,CG}). On the other hand, it is well-known since \cite{majda} that solutions of
the wave equation on a bounded domain may disappear when using ``transparent'' boundary conditions. For instance, the solution of the 1-D wave equation 
\ba
\partial _t^2 y-\partial_x^2y=0,&&  \text{in }  (0,T)\times (0,1), \label{Int1}\\
\partial _x y(t,1)= - \partial _t y(t,1),&& \text{in }  (0,T),  \label{Int2}\\
\partial _x y(t,0)=\partial _t y(t,1),&& \text{in }  (0,T),  \label{Int3}\\
(y(0,.),\partial _t y(0,.))=(y_0,z_0), &&\text{in } (0,1),\label{Int4}
\ea
is finite-time stable in $\{ (y,z)\in H^1(0,1)\times L^2(0,1); \ y(0)+y(1)+\int_0^1z(x)dx=0\}$, with $T=1$ as extinction time (see e.g. \cite[Theorem 0.5]{komornik-book}
for the details.) The condition \eqref{Int2} is transparent in the sense that a wave $y(t,x)=f(x-t)$ traveling to the right satisfies \eqref{Int2} and leaves the domain
at $x=1$ without generating any reflected wave.  Note that we can replace \eqref{Int3} by the boundary condition $y(t,0)=0$ (or $\partial _x y(t,0)=0$). Then a finite-time
extinction still occurs (despite the fact that waves bounce at $x=0$) with an extinction time $T=2$.  We refer to \cite{CZ} for the analysis of the finite-time 
extinction property
for a nonhomogeneous string with a viscous damping at one extremity, and to \cite{APR} for the investigation of the finite-time stabilization of a network of strings. 

The finite-time stability of \eqref{Int1}-\eqref{Int4} is easily established when writing \eqref{Int1} as a first order hyperbolic system
\[
\partial _t \left( \begin{array}{l}r\\s\end{array} \right)  -\partial _x \left( \begin{array}{l} s\\r \end{array}\right) =0
\]
with $(r,s)=(\partial _x y,\partial _t y)$, and next introducing the Riemann invariants $u=r-s$, $v=r+s$ that solve the system of two transport equations
\begin{eqnarray*}
&&\partial _t u+ \partial _x u =0,\label{Int11} \\
&&\partial _t v-  \partial _x v =0.\label{Int12}
\end{eqnarray*} 
The boundary conditions \eqref{Int2} and  \eqref{Int3} yield $u(t,0)=v(t,1)=0$ (and hence $u(t,.)=v(t,.)=0$ for $t\ge 1$), while the boundary conditions
\eqref{Int2} and $y(t,0)=0$ yield $v(t,1)=0$ and $u(t,0)=v(t,0)$ (and hence $v(t,.)=0$ for $t\ge 1$ and $u(t,.)=0$ for $t\ge 2$).

The goal of this paper is to show that the finite-time extinction property can be realized for 1-D first order quasilinear hyperbolic systems
\be
\partial_t Y +\partial _x F(Y) =0,\label{Int12bis}
 \ee
that can be put in diagonal form, i.e. for which there is a smooth change of (dependent) variables that transforms \eqref{Int12bis} into a system of two 
nonlinear transport equations of the form
\ba
&&\partial _t u +\lambda (u,v)\partial _x u=0,\label{Int13}\\
&&\partial _t v + \mu (u,v) \partial _x v=0,\label{Int14}
\ea
where $\mu (u,v)\le -c < c \le\lambda (u,v)$ are smooth functions and $c>0$ is some constant. 
In practice, the functions $u$ and $v$ are  Riemann invariants of \eqref{Int12bis} 
(see e.g. \cite{evans}). 

The generalization of the finite-time extinction property of the wave equation to systems of the form \eqref{Int13}-\eqref{Int14} is the main aim of this paper.
Of course, one could just consider homogeneous Dirichlet conditions
\[
u(t,0)=v(t,1)=0,
\]
but this would impose to restrict ourselves to initial data $(u_0,v_0)$ fulfilling the compatibility conditions 
\[
u_0(0)=v_0(1)=0.
\]
Rather, we shall consider boundary conditions whose dynamics obey a finite-time stable ODE, namely
\begin{eqnarray}
&&\frac{d}{dt} u(t,0) = -K\text{sgn}(u(t,0))|u(t,0)|^\gamma,  \label{Int21}\\ 
&&\frac{d}{dt} v(t,1) = -K\text{sgn}(v(t,1))  |v(t,1)|^\gamma ,\label{Int22}
\end{eqnarray} 
($(K,\gamma )\in (0,+\infty ) \times (0,1)$ being some constants)
and supplement the system \eqref{Int13}-\eqref{Int14}, \eqref{Int21}-\eqref{Int22} with the initial condition
\be
u(0,x)=u_0(x),\ v(0,x)=v_0(x). \label{Int23}
\ee
The first main result in this paper (Theorem \ref{main1}) asserts  that for any pair $(u_0,v_0)$ of (small enough) Lipschitz continuous initial data, 
system \eqref{Int13}-\eqref{Int14} and \eqref{Int21}-\eqref{Int23} admits
a unique solution in some class of Lipschitz continuous functions,  and that this solution is defined for all times $t\ge 0$ and vanishes
for roughly $t\ge 1/c$.  Theorem \ref{main1} is proved by using a fixed-point argument (Schauder Theorem) and energy estimates.

Sometimes, the boundary condition at one extremity of the domain (say 0) is imposed by the context, so that we cannot 
chose the condition $u(t,0)=0$ (or its generalization \eqref{Int21}) for the Riemann invariant $u$. Then, we have to replace
\eqref{Int21} by a boundary condition of the form
\be
u(t,0)=h(v(t,0),t),
\label{Int21bis}
\ee 
for some (smooth) function $h=h(v,t)$. The second main result in this paper (Theorem \ref{main2}) asserts that the system 
\eqref{Int13}-\eqref{Int14} and \eqref{Int22}-\eqref{Int21bis} is still locally well-posed with roughly an extinction time
$T=2/c$. The result is obtained for small initial data and for $||\partial_t h||_\infty$ small enough. 

The results obtained in this paper can be applied to: 
\begin{enumerate}
\item the $p-$system
\ba
&&\partial _t r  -\partial _x s=0,\\
&&\partial _t s-\partial _x [p(r)]=0,
\ea
where $p\in C^1(\R )$ is any given function;
\item the shallow water  equations (also called  Saint-Venant  equations \cite{StVenant})
\ba
&&\partial _t H  +\partial _x (HV)=0,\\
&&\partial _t V+\partial _x ( \frac{V^2}{2} +gH)=0,
\ea
where $H$ is the water depth and $V(t,x)$ the averaged horizontal velocity of water in a canal, and $g$ the gravitation constant;
\item Euler's equations for barotropic compressible gas
 \ba
&&\partial _t \rho   + \partial _x (\rho V)=0,\\
&&\partial _t (\rho V)+\partial _x ( \rho V^2  +p)=0,
\ea
where $\rho$ is the mass density, $V$  the velocity, and $p=p(\rho )$  the pressure of the gas.
\item The same strategy could in theory be applied to any system possessing Riemann invariants.
Riemann invariants exist for most $2\times 2$ systems, and also for some larger systems 
(e.g. the $3\times 3$ system of Euler's equations for compressible gas, see \cite[chapters 18,20]{Smoller}). 
\end{enumerate}
For the sake of shortness, we will limit ourselves to the stabilization of Saint-Venant equations, and will give an extension of the above finite-time stabilization results to 
a tree-shaped network of canals. The obtained extinction time will be roughly $d/c$, where $d$ denotes the depth of the tree
(Theorem \ref{main5}). 

There is a huge literature about the controllability and stabilization of first order hyperbolic equations (see e.g. \cite{GL,glass,li,
perrollaz1,LRW,GH,GDL}). In particular, the control of Saint-Venant equations has attracted the attention of the control
 community because of its relevance to the regulation of water flows in networks of canals or rivers. 
We refer the reader to  e.g. \cite{CAB, XS,LS,HPCAB,GL1,BCA,DBCA,GL2,BC}, where Riemann invariants played often a great role in the design of the controls. 
Our main contribution here is to notice that a finite-time stabilization can be achieved as well, i.e. that bounces of waves at the two ends of the domain  can
be avoided.  

A numerical scheme and some numerical experiments  for the finite-time stabilization of water flows in a canal  may be found in \cite{PRIFAC}, in which certain
results of this paper were announced.

The paper is outlined as follows. Classical but important properties of linear transport equations are recalled in Section 2. 
In Section 3, we introduce two boundary controls whose dynamics are governed by a finite-time stable ODE,
and prove the existence and uniqueness of a  solution  to the closed-loop system, and the fact that this solution 
reaches the null state in finite time. In Section 4, we investigate the same problem with only one boundary
control, the other boundary condition being imposed by the physical context. In the last section, we apply the results in Sections 3 and 4 to the regulation of water flows 
in a canal with one or two boundary controls, and extend the finite-time stabilization results to any tree-shaped network of canals.

\section{Some background about linear transport equations}

\subsection{Notations}

$\Cc ^0([0,T]\times [0,1])$ denotes the space of continuous functions $u:[0,T]\times [0,1]\to \R$. It is endowed with the norm
\[
||u||_{\Cc ^0([0,T]\times [0,1])} = \sup_{(t,x)\in \Ot }|u(t,x)|.
\]
The norm of the space $L^p(0,1)$ is denoted $||\cdot ||_p$ for $1\le p\le \infty$. $\text{Lip}([0,1])$ denotes the space of Lipschitz continuous
functions $u:[0,1]\to \R$. It may be identified with the Sobolev space $W^{1,\infty}(0,1)$. $\text{Lip}([0,1])$ is endowed with the
$W^{1,\infty}(0,1)$-norm; that is
\[
||u||_{\text{Lip}([0,1])} =||u||_{W^{1,\infty}(0,1)} = ||u||_\infty + ||u'||_{\infty} \cdot
\]  
We use similar norms for $\text{Lip}(\R)$, $\text{Lip}([0,T]\times [0,1])$, etc.
\subsection{Linear transport equation}

In this section we consider the initial boundary-value problem for the following linear transport equation
\begin{equation}
\partial_t y+a(t,x)\partial_x y=0.  \label{eq:TR}
\end{equation}
We assume thereafter that 
\begin{eqnarray}
&& a\in \Cc ^0 ([0,T]\times [0,1])\cap L^\infty(0,T; \Lip ([0,1])), \label{A3}\\
&& a(t,x) \ge c>0, \qquad \forall (t,x)\in [0,T]\times [0,1], \label{A4}
\end{eqnarray}
where $c$ denotes some constant.
Note that the case when $a(t,x)\leq -c<0$ can be reduced to \eqref{A4} by the transformation $x\ra 1-x$.

\subsection{Properties of the flow}
\label{subsec:flow}

By \eqref{A3}, $a$ is uniformly Lipschitz continuous in the variable $x$, with say a Lipschitz constant $L=||a||_{L^{\infty}(0,T ; \Lip([0,1]))}$. 
Since we intend to use the method of characteristics to solve \eqref{eq:TR}, we need to study the flow associated with  $a$.
\begin{defi}
For $(t,x)\in [0,T]\times [0,1]$, let $\phi(.,t,x)$ denote the $\mathcal{C}^1$ maximal solution to the Cauchy problem
\begin{equation}
\begin{cases} 
\partial_s \phi(s,t,x)=a(s,\phi(s,t,x)), \\
\phi(t,t,x)=x,
\end{cases}
\end{equation}
 which is defined on a certain subinterval $[e(t,x),f(t,x)]$ of $[0,T]$ (which is closed since $[0,1]$ is compact), and with possibly $e(t,x)$ and/or $f(t,x)=t$.
 Let 
 \[
 \text{Dom}\,  \phi =\{  (s,t,x);\ (t,x) \in [0,T] \times [0,1],\  s\in [ e(t,x),f(t,x) ]  \}
 \]
 denote the {\em domain} of $\phi$. 
\end{defi}
 Note that 
 \begin{equation}
 e(t,x)>0 \Rightarrow \phi(e(t,x),t,x)=0.
 \label{A8}
 \end{equation}
We take into account the influence of the boundaries by introducing the sets
\begin{eqnarray*}
P&:=&\{(s,\phi(s,0,0));\  \ s \in[0,f(0,0)]\},\\
I&:=& \{(t,x)\in  [0,T]\times  [0,1] \setminus P; \ e(t,x)=0 \}, \\
J&:=& \{(t,x)\in [0,T]\times  [0,1] \setminus P; \ \phi(e(t,x),t,x)=0 \}.
\end{eqnarray*}
(See Figure \ref{fig1}.) Note that both $I$ and $J$ are open in $[0,T]\times [0,1]$. 
\begin{figure}[http]
\begin{center}
\includegraphics[scale=0.5]{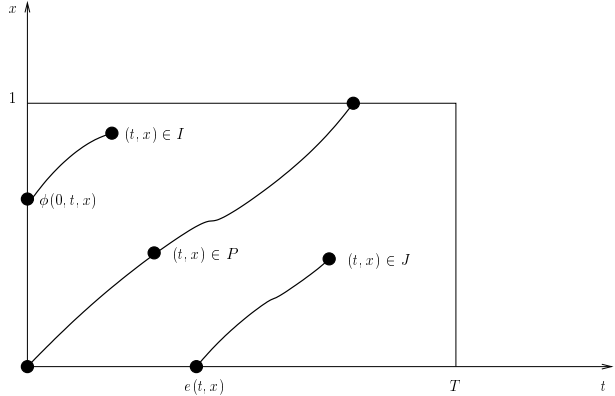}
\end{center}
\caption{Partition of $[0,T]\times [0,1]$ into $I\cup P\cup J$.}
\label{fig1}
\end{figure}

\begin{prop}\label{prop:Lip}
Let $a$ satisfying \eqref{A3},  let $L=||a||_{L^{\infty}(0,T ; \Lip([0,1]))}$, and let
$$K:=\max(1,||a||_{\Cc^0(\Ot)})e^{LT}.$$
Then $\phi$ is $K$-Lipschitz on its domain; that is, for all $(s_1,t_1,x_1),(s_2,t_2,x_2)\in \text{Dom}\, \phi$
\be 
\label{WW0}
|\phi(s_1,t_1,x_1) -\phi (s_2,t_2,x_2)|\le K \left( |s_1-s_2| + |t_1-t_2| + |x_1-x_2| \right) .
\ee
\end{prop}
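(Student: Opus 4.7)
The plan is to prove \eqref{WW0} by inserting two auxiliary points and applying the triangle inequality:
\[
|\phi(s_1,t_1,x_1) - \phi(s_2,t_2,x_2)| \le A_1 + A_2 + A_3,
\]
where $A_1=|\phi(s_1,t_1,x_1)-\phi(s_2,t_1,x_1)|$, $A_2=|\phi(s_2,t_1,x_1)-\phi(s_2,t_2,x_1)|$, and $A_3=|\phi(s_2,t_2,x_1)-\phi(s_2,t_2,x_2)|$. Each term fixes two of the three arguments, reducing the problem to classical one-variable estimates: an ODE pointwise bound, a Gronwall estimate in the spatial argument, and the semigroup (flow) property for the temporal argument.

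The bound on $A_1$ is immediate from $\partial_s\phi=a(s,\phi)$ and \eqref{A3}, giving $A_1\le \|a\|_{\mathcal{C}^0(\Ot)}|s_1-s_2|$. For $A_3$, both $\phi(\cdot,t_2,x_1)$ and $\phi(\cdot,t_2,x_2)$ satisfy the integral identity $\phi(s,t_2,x_i)=x_i+\int_{t_2}^s a(\tau,\phi(\tau,t_2,x_i))\,d\tau$; subtracting and using the $L$-Lipschitz dependence of $a(\tau,\cdot)$ yields a classical Gronwall inequality with the conclusion $A_3\le|x_1-x_2|e^{L|s_2-t_2|}\le|x_1-x_2|e^{LT}$.

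For $A_2$, I would assume without loss of generality that $t_1\le t_2\le s_2$ (the remaining configurations are handled similarly by reversing the roles of $t_i$ and $s_2$). Since $(s_2,t_1,x_1)\in\mathrm{Dom}\,\phi$, the characteristic $s\mapsto\phi(s,t_1,x_1)$ is defined on $[t_1,s_2]$, so in particular $\phi(t_2,t_1,x_1)$ makes sense. By uniqueness of solutions to the Cauchy problem, the flow identity $\phi(s_2,t_1,x_1)=\phi(s_2,t_2,\phi(t_2,t_1,x_1))$ holds, and the $A_3$-type Gronwall bound gives
\[
A_2=|\phi(s_2,t_2,\phi(t_2,t_1,x_1))-\phi(s_2,t_2,x_1)|\le e^{LT}|\phi(t_2,t_1,x_1)-x_1|\le e^{LT}\|a\|_{\mathcal{C}^0(\Ot)}|t_1-t_2|,
\]
where the last step again uses $|\partial_s\phi|\le\|a\|_{\mathcal{C}^0(\Ot)}$ applied to $s\mapsto\phi(s,t_1,x_1)$ between $t_1$ and $t_2$. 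Combining the three bounds and noting that $K=\max(1,\|a\|_{\mathcal{C}^0(\Ot)})e^{LT}$ dominates each of $\|a\|_{\mathcal{C}^0(\Ot)}$, $e^{LT}$, and $\|a\|_{\mathcal{C}^0(\Ot)}e^{LT}$, produces \eqref{WW0}.

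The main difficulty I anticipate lies in the case analysis required to justify the flow identity for $A_2$ when $s_2$ does not bracket $t_1,t_2$, or when the auxiliary intermediate characteristic threatens to exit the spatial interval $[0,1]$. Here the partition $[0,T]\times[0,1]=I\cup P\cup J$ together with the strict positivity \eqref{A4} (which forces each characteristic to be strictly increasing in $s$, so that monotonicity dictates whether it exits through $s=0$, $x=0$, $x=1$, or $s=T$) is used to reduce every configuration to the well-behaved one treated above, by choosing the auxiliary insertion points to remain inside $\mathrm{Dom}\,\phi$.
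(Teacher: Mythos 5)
Your decomposition into the three increments $A_1,A_2,A_3$ and the estimates you give for each (the pointwise ODE bound for the $s$-increment, Gronwall for the $x$-increment, and the semigroup identity reduced to the $x$-case for the $t$-increment) are exactly the computations in the paper's proof, with the same constants. The difference is in how the two arguments make those computations legitimate, and this is where your proposal has a genuine gap. The hypotheses only guarantee that $(s_1,t_1,x_1)$ and $(s_2,t_2,x_2)$ lie in $\mathrm{Dom}\,\phi$; nothing guarantees that the auxiliary points $\phi(s_2,t_1,x_1)$, $\phi(s_2,t_2,x_1)$, or $\phi(t_2,t_1,x_1)$ are defined. For instance, the characteristic through $(t_1,x_1)$ may exit $[0,1]$ through $x=1$ strictly before time $s_2$, in which case $A_1$ compares $\phi(s_1,t_1,x_1)$ with a quantity that does not exist, and the whole triangle inequality collapses. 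You flag this as "the main difficulty" and assert that the partition $I\cup P\cup J$ and the monotonicity forced by \eqref{A4} let you "choose the auxiliary insertion points to remain inside $\mathrm{Dom}\,\phi$", but you never say which points you would choose or how the three partial estimates would recombine after clamping the evaluation times to the exit times $f(\cdot,\cdot)$ of the relevant characteristics. As written, the reduction is not carried out, and it is not obvious that it can be done without substantial extra bookkeeping.

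The paper removes this obstruction at the outset rather than by case analysis: it introduces an extension operator $\Pi$ sending $a$ to a function $\tilde a$ on all of $\R^2$ (even reflection and $2$-periodization in $x$, constant extension in $t$), which preserves the sup norm, the Lipschitz constant $L$, and the lower bound $c$ (see \eqref{ext1}--\eqref{ext3}). The flow $\tilde\phi$ of $\tilde a$ is then globally defined, coincides with $\phi$ on $\mathrm{Dom}\,\phi$, and every intermediate evaluation in the triangle inequality makes sense unconditionally; the three estimates you wrote then go through verbatim for $\tilde\phi$ and restrict to $\phi$. (This extension is not a throwaway device: Propositions \ref{prop:regu} and \ref{prop:regBis} reuse $\tilde\phi$ and the uniform bound \eqref{eq:Gron}.) To complete your proof you would either need to import such an extension, or genuinely execute the case analysis you postponed, e.g.\ by replacing $s_2$ with $\min\bigl(f(t_1,x_1),f(t_2,x_1),s_2\bigr)$ and controlling the error committed on the remaining time interval --- which is precisely the work your last paragraph defers.
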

The proof of Proposition \ref{prop:Lip} is given in appendix, for the sake of completeness. 

We can now study the regularity of $e$.
\begin{prop}
\label{prop:regu} Let $a$ be as in Proposition \ref{prop:Lip}, let  $(t,x)\in [0,T] \times [0,1] $, let $\{ a_n \} \subset    \mathcal{C}^0(\Ot)\cap
 L^\infty(0,T;\Lip([0,1]))$ be    a   sequence    such   that  $||a_n||_{L^\infty(0,T;\Lip([0,1]))}$  is  bounded and 
 $$||a_n-a||_{\mathcal{C}^0(\Ot)}\rightarrow      0\quad \text{ as } n\ra +\infty,$$
  and  let $\{ (t_n,x_n)\} \subset  \Ot$ be a sequence such that $(t_n,x_n)\rightarrow (t,x)$. Then
 \be
\label{A11}
 e_n(t_n,x_n)\rightarrow e(t,x).
 \ee
\end{prop}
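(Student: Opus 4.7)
The strategy is to reduce the claim to continuous dependence for the flow ODE, followed by a transversal--crossing argument. First, I would extend $a$ and each $a_n$ to $\tilde a,\tilde a_n\in \Cc^0([0,T]\times \R)$, preserving the uniform Lipschitz constant in $x$ and the lower bound $\tilde a,\tilde a_n\ge c$ (for instance by setting $\tilde a(t,x)=a(t,0)$ for $x<0$ and $\tilde a(t,x)=a(t,1)$ for $x>1$). The resulting flows $\tilde\phi,\tilde\phi_n\colon [0,T]^2\times \R\to \R$ are then globally defined and, by \eqref{A4}, strictly increasing in $s$ with slope $\ge c$. Inspection shows that $e(t,x)$ equals the unique $s^*\in[0,t]$ with $\tilde\phi(s^*,t,x)=0$ when $\tilde\phi(0,t,x)\le 0$, and equals $0$ otherwise; the same description holds for $e_n(t_n,x_n)$.

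Second, I would prove uniform convergence on $[0,T]$ of $\psi_n(s):=\tilde\phi_n(s,t_n,x_n)$ to $\psi(s):=\tilde\phi(s,t,x)$. Writing
$$\psi_n(s)-\psi(s)=(x_n-x)-\int_{t}^{t_n}\tilde a_n(\tau,\psi_n(\tau))\,d\tau+\int_{t}^{s}\bigl[\tilde a_n(\tau,\psi_n(\tau))-\tilde a(\tau,\psi(\tau))\bigr]d\tau,$$
splitting the last integrand as $[\tilde a_n-\tilde a](\tau,\psi_n)+[\tilde a(\tau,\psi_n)-\tilde a(\tau,\psi)]$, and using the uniform Lipschitz bound $L$ of $\tilde a$ in $x$ together with $||a_n-a||_{\Cc^0(\Ot)}\ra 0$, Grönwall's inequality yields
$$||\psi_n-\psi||_{\Cc^0([0,T])}\le C\bigl(|x_n-x|+|t_n-t|+||a_n-a||_{\Cc^0(\Ot)}\bigr)e^{LT}\longrightarrow 0.$$

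To conclude, I would distinguish cases according to the sign of $\psi(0)$. If $\psi(0)>0$, uniform convergence gives $\psi_n(0)>0$ for large $n$, hence $e_n(t_n,x_n)=0=e(t,x)$. If $\psi(0)<0$, set $s_*:=e(t,x)>0$; the slope-$c$ lower bound makes the zero crossing transversal, so $\psi(s_*-\varepsilon)\le -c\varepsilon<0$, while $\psi_n$ becomes positive at $\min(s_*+\varepsilon,T)$ (using either transversality when $s_*+\varepsilon\le T$, or the trivial bound $\psi_n(T)\ge x_n\ge 0$ when $s_*=T$); the intermediate value theorem then pins the unique zero of $\psi_n$ within $\varepsilon$ of $s_*$. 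In the borderline case $\psi(0)=0$ (so $e(t,x)=0$), the inequality $\psi_n(s)\ge \psi_n(0)+cs$ gives $e_n=0$ when $\psi_n(0)\ge 0$, and $e_n\le -\psi_n(0)/c\to 0$ when $\psi_n(0)<0$.

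The main technical point is the Grönwall estimate absorbing three independent perturbations (in $x_n$, $t_n$, and $a_n$); once that is in hand, the transversality enforced by $a\ge c$ reduces the rest to a short application of the intermediate value theorem.
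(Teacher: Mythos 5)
Your argument is correct, and it rests on the same two pillars as the paper's proof: an extension of $a$ beyond $[0,1]$ preserving the Lipschitz constant and the lower bound \eqref{A4}, a Gr\"onwall continuous-dependence estimate for the flows, and the transversality supplied by $a\ge c$. Where you differ is in the packaging, and your version is arguably cleaner: the paper establishes $\limsup_n e_n(t_n,x_n)\le e(t,x)$ and $\liminf_n e_n(t_n,x_n)\ge e(t,x)$ separately, organized around the partition $I\cup P\cup J$ (treating $I$ via a positive distance to the boundary, $J\cup P$ via the bounds $c\epsilon\le\phi\le 1-c\epsilon$ on $[e+\epsilon,t-\epsilon]$ and the sign of $\tilde\phi(s,t,x)$ for $s<e(t,x)$), and it splits the perturbation into two pieces --- the Gr\"onwall bound \eqref{eq:Gron} for $a_n\to a$ at the fixed base point plus Proposition \ref{prop:Lip} to move from $(t,x)$ to $(t_n,x_n)$. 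You instead absorb all three perturbations into a single Gr\"onwall estimate for $\psi_n-\psi$ and exploit the strict monotonicity of $s\mapsto\tilde\phi(s,t,x)$ (slope $\ge c$) to identify $e(t,x)$ as the unique, transversal zero of $\psi$ when $\psi(0)\le 0$; the conclusion then follows from the intermediate value theorem with the three cases $\psi(0)>0$, $\psi(0)<0$, $\psi(0)=0$ replacing the geometric case analysis over $I$, $J$, $P$. Two small points to make explicit if you write this up: the identity $e_n(t_n,x_n)=\min\{s\in[0,t_n]:\ \tilde\phi_n(r,t_n,x_n)\in[0,1]\ \forall r\in[s,t_n]\}$ reduces to your unique-zero description only because $\tilde\phi_n(\cdot,t_n,x_n)$ is increasing with terminal value $x_n\in[0,1]$ (so the constraint $\le 1$ is automatic backward in time), and in the case $\psi(0)<0$ one should note that the unique zero of $\psi_n$ on $[0,T]$ automatically lies in $[0,t_n]$ since $\psi_n(0)<0\le x_n=\psi_n(t_n)$, so it does coincide with $e_n(t_n,x_n)$.
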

\begin{proof}
We use again the extension operator $\Pi$ introduced in the proof of Proposition \ref{prop:Lip} (see the appendix) and set $\tilde{a}_n=\Pi(a_n)$ and $\tilde{a}=\Pi(a)$.
Let $\tilde{\phi}_n$ and $\tilde{\phi}$ denote their respective flows. Recall that $\tilde{\phi}$ and $\phi$ coincide on $\text{Dom}\, \phi$ (resp.
 $\tilde{\phi}_n$ and $\phi _n$ coincide on $\text{Dom} \, \phi _n$). From 
 \begin{eqnarray*}
 |\partial _s [\tilde \phi _n (s,t,x) -\tilde\phi (s,t,x)] | &=& |\tilde a_n (s,\tilde\phi _n (s,t,x)) - \tilde a (s,\tilde\phi (s,t,x))| \\
 &\le& |\tilde a_n (s,\tilde\phi _n (s,t,x)) - \tilde a (s,\tilde \phi _n (s,t,x))| \\
 &&\qquad    + |\tilde a (s, \tilde \phi _n (s,t,x)) -\tilde a( s , \tilde \phi (s,t,x)) |\\
 &\le& ||\tilde a _n -\tilde a ||_{L^\infty  (\R ^2 )}   + || \tilde a||_{L^\infty(\R ;\text{Lip} (\R )) }  | \tilde \phi _n(s,t,x)  -\tilde \phi  (s,t,x)|,    
 \end{eqnarray*}
\eqref{ext1}, \eqref{ext2}  and Gronwall's lemma, we infer that for all $n\ge 0$ and all $(s,t,x)\in [0,T]^2\times [0,1]$, we have
\begin{equation}
|(\tilde{\phi}_n-\tilde{\phi})(s,t,x)|\leq T  || a_n-  a ||_{\Cc ^0 (\Ot ) } e^{T||a||_{L^\infty(0,T;\Lip([0,1]))}}.
\label{eq:Gron}
\end{equation}
It may be seen that
$$e_n(t_n,x_n)=\min\{s\in[0,t_n];  \ \forall r\in [s,t_n], \ \  \tilde{\phi}_n(r,t_n,x_n)\in[0,1]\}.$$
\begin{itemize}
\item If $(t,x)\in I$, then since we have excluded the characteristic coming from $(0,0)$, we have that
$$\underset{s\in[0,t]}{\inf} \mathrm{dist}( (s,\phi(s,t,x)),[0,t]\times \{0\})>0,$$ 
where $\mathrm{dist} ((t,x),F)=\inf_{(t',x')\in F} ( |t-t'|+|x-x'| )$.
So we infer from \eqref{eq:Gron} that for $n$ large enough $\phi_n(.,t,x)$ is defined on $[0,t]$, i.e. $e_n(t,x)=0$. Then \eqref{A11} is obvious.
\item  From now on, we assume that  $(t,x)\in J \cup P$. 
We claim that 
\be
\label{A12}
\limsup_{n\to \infty}  e_n(t_n,x_n) \leq e(t,x).
\ee
Indeed, if $e(t,x)=t$, then
$$\limsup_{n\to \infty}  e_n(t_n,x_n)\leq \limsup_{n\to \infty}  t_n=t=e(t,x).$$
Otherwise, we have $e(x,t)<t$, and using \eqref{A4} we obtain for any $\epsilon \in (0, (t-e(t,x))/2)$, 
\be
c \epsilon \leq \phi(s,t,x)\leq 1-c \epsilon ,\quad
\ \forall s\in [e(t,x)+\epsilon,t-\epsilon].
\label{A13}\ee
However, we have for $n$ large enough
\begin{gather}
  ||\tilde \phi_n-\tilde \phi||_{\Cc^0 ([0,T]^2\times [0,1] ) }\leq \frac{c \epsilon}{4}, \label{A14}\\
|\tilde \phi_n(s,t_n,x_n)-\tilde \phi_n(s,t,x)|\leq \frac{c \epsilon}{4}, \label{A15}
\end{gather}
the second estimate coming from the uniform bound on $||a_n||_{L^\infty(0,T;\Lip([0,1]))}$ and Proposition \ref{prop:Lip}.
Combining \eqref{A13}, \eqref{A14} and \eqref{A15},  we see that for $n$ large and for all $s\in [e(t,x)+\epsilon, t-\epsilon ]$, 
$\phi _n (s ,t_n, x_n)$ is well defined and 
$$\phi_n(s,t_n,x_n)\geq \frac{c \epsilon}{2}.$$ 
This yields $\limsup_{n\to \infty}  e_n(t_n,x_n)\leq e(t,x)+\epsilon$, and since $\epsilon$ was arbitrarily small, \eqref{A12} follows.

If $(t,x)\in P$ the proof of \eqref{A11} is complete, for $\liminf_{n\to \infty}  e_n(t_n,x_n)\geq 0=e(t,x)$. Assume finally that $(t,x)\in J$, so that $e(t,x)>0$.
Pick any $s \in (0,e(t,x))$. We obviously have $\tilde{\phi}(s,t,x)<0$, thanks to the lower bound 
on $\tilde{a}$ (see \eqref{ext3}). But we know from  \eqref{eq:Gron} and Proposition \ref{prop:Lip}  that 
$$\tilde{\phi}_n(s,t_n,x_n)\underset{n\ra +\infty}{\ra} \tilde{\phi}(s,t,x),$$ and hence for $n$ large enough, $\tilde{\phi}_n(s,t_n,x_n)<0$ 
and $s<e(t_n,x_n)$.  Thus, we conclude that $\liminf_{n\to \infty}  e_n(t_n,x_n)\geq s$. As $s$ was arbitrarily close to $e(t,x)$, we end up with 
$$\liminf_{n\to \infty}  e_n(t_n,x_n) \geq e(t,x).$$
\end{itemize}
The proof of \eqref{A11} is complete.
\end{proof}
\begin{Rk}
 \begin{enumerate}
\item For $a_n=a$,  this shows that $e$ is continuous on $\Ot$.
  \item Since $[0,T]\times[0,1]$ is compact, Proposition \ref{prop:regu} implies that $e_n$ converges uniformly toward $e$ on $\Ot$.
 \end{enumerate}
\end{Rk}
\begin{prop}
\label{prop:reg} If, in addition to \eqref{A3}-\eqref{A4}, we have  that
$\partial_x a \in \mathcal{C}^0(\Ot)$, then $\phi$ is $\mathcal{C}^1$ on $\text{Dom } \phi$ and $e$ is $\mathcal{C}^1$ on $\Ot \setminus P$, with
for $(t,x)\in J$
\begin{equation}
\label{XYZ}
 \partial_t e(t,x)=\frac{a(t,x)\exp(-\int_{e(t,x)}^t{\!\!\partial_x a(r,\phi(r,t,x))dr})}{a(e(t,x),0)} , \
 \partial_x e(t,x)=-\frac{\exp(-\int_{e(t,x)}^t{\!\!\partial_x a(r,\phi(r,t,x))dr})}{a( e(t,x),0 )} \cdot
\end{equation}
\end{prop}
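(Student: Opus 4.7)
The plan is to reduce everything to a clean ODE situation by extending the field $a$, applying the standard theorem on differentiable dependence, and then invoking the implicit function theorem for $e$.

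First I would extend $a$ to a field $\tilde a$ defined on $[0,T]\times\R$ that keeps $\partial_x a$ continuous (e.g.\ a reflection of $a$ across $x=0$ and $x=1$ followed by multiplication by a smooth cutoff, or a refinement of the operator $\Pi$ from the appendix). By construction $\tilde a\in\mathcal{C}^0([0,T]\times\R)$, $\partial_x\tilde a\in\mathcal{C}^0([0,T]\times\R)$, and $\tilde a=a$ on $[0,T]\times[0,1]$; the associated flow $\tilde\phi$ is then defined on $[0,T]^2\times\R$ and coincides with $\phi$ on $\mathrm{Dom}\,\phi$, as already observed in the proof of Proposition \ref{prop:regu}. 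Since $\tilde a$ is $\mathcal{C}^1$ in $x$ with continuous partial derivative, the classical theorem on differentiable dependence on initial data and parameters shows that $\tilde\phi\in\mathcal{C}^1([0,T]^2\times\R)$, and the partials satisfy
\begin{align*}
\partial_s\tilde\phi(s,t,x)&=\tilde a(s,\tilde\phi(s,t,x)),\\
\partial_x\tilde\phi(s,t,x)&=\exp\Bigl(\int_t^s\partial_x\tilde a(r,\tilde\phi(r,t,x))\,dr\Bigr),\\
\partial_t\tilde\phi(s,t,x)&=-\tilde a(t,x)\exp\Bigl(\int_t^s\partial_x\tilde a(r,\tilde\phi(r,t,x))\,dr\Bigr),
\end{align*}
where the last two follow from solving the linearised equation with the initial conditions $\partial_x\tilde\phi(t,t,x)=1$ and $\partial_t\tilde\phi(t,t,x)=-\tilde a(t,x)$ (obtained by formally differentiating $\tilde\phi(t,t,x)=x$). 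Restricting to $\mathrm{Dom}\,\phi$ gives the $\mathcal{C}^1$ regularity of $\phi$.

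For the regularity of $e$, I would use the decomposition $\Ot\setminus P=I\cup J$ together with the fact (noted after the definition of $I,J$) that both sets are open in $\Ot$. On $I$ the function $e$ is identically $0$, hence trivially $\mathcal{C}^1$. On $J$, the identity $\tilde\phi(e(t,x),t,x)=0$ combined with
\[
\partial_s\tilde\phi(e(t,x),t,x)=\tilde a(e(t,x),0)=a(e(t,x),0)\ge c>0
\]
puts us in position to apply the implicit function theorem to $F(s,t,x):=\tilde\phi(s,t,x)$ at every point of $J$. This yields $e\in\mathcal{C}^1(J)$ and
\[
\partial_t e(t,x)=-\frac{\partial_t\tilde\phi(e(t,x),t,x)}{\partial_s\tilde\phi(e(t,x),t,x)},\qquad \partial_x e(t,x)=-\frac{\partial_x\tilde\phi(e(t,x),t,x)}{\partial_s\tilde\phi(e(t,x),t,x)}.
\]
Substituting the explicit formulas for $\partial_t\tilde\phi$ and $\partial_x\tilde\phi$ and using $\tilde\phi=\phi$, $\tilde a=a$ along the characteristic (which stays in $[0,1]$ on $[e(t,x),t]$) gives exactly the expressions in \eqref{XYZ}, after rewriting $\int_t^{e(t,x)}$ as $-\int_{e(t,x)}^t$.

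The main obstacle is the first step: guaranteeing an extension $\tilde a$ that preserves continuity of $\partial_x a$, not merely Lipschitz control. The operator $\Pi$ recalled in the appendix was tailored to the Lipschitz setting, so one must either enhance it (a standard even-reflection plus cutoff works because $\partial_x a$ is only required to be continuous) or else work directly on $\mathrm{Dom}\,\phi$ by differentiating the integral equation $\phi(s,t,x)=x+\int_t^s a(r,\phi(r,t,x))\,dr$ and separately treating the boundary strata of $\mathrm{Dom}\,\phi$. Once this technical point is settled, everything else is a mechanical application of the implicit function theorem.
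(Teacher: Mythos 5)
Your argument is essentially the paper's: the $\mathcal{C}^1$ regularity of $\phi$ is the classical theorem on differentiable dependence on data, and on $J$ the formulas \eqref{XYZ} come from the implicit function theorem applied to $\phi(e(t,x),t,x)=0$ together with $\partial_s\phi(e(t,x),t,x)=a(e(t,x),0)\ge c>0$ (the paper treats the points of $J\setminus\ot$ by passing to the limit in \eqref{XYZ} rather than by extending $a$). One caveat on your technical aside: an even reflection does \emph{not} preserve continuity of $\partial_x a$ across the reflection point (the $x$-derivative flips sign there), so you would need a first-order, Whitney-type extension such as $\tilde a(t,1+h)=2a(t,1)-a(t,1-h)$, or else fall back on your alternative of working directly on $\text{Dom}\,\phi$.
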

\begin{proof}
The regularity of $\phi$ is a classical result (see e.g. \cite{hartman}). If $(t,x)\in I, \ e(t,x)=0$ 
and the result is obvious. For $(t,x)\in J\cap \ot$ we have $\phi(e(t,x),t,x)=0$ and $\partial_s\phi(e(t,x),t,x)>0$, therefore the Implicit Function Theorem allows us conclude.
Finally, for $(t,x)\in J\setminus \ot$, it is sufficient to pass to the limit in \eqref{XYZ}.
\end{proof}
\begin{prop}
 \label{prop:regBis}
Let $a$ fulfill \eqref{A3} and \eqref{A4}, and let $L= ||a||_{L^\infty (0,T;\text{Lip} ( [0,1] ) ) } $.
Then the function $e$ is $\bar{K}$-Lipschitz on $\Ot$ where $\bar{K}$ is given by
$$\bar{K}= c^{-1}  \max \Big( 1 , ||a||_{\Cc^0(\Ot)} \Big)   e^{LT} .$$
\end{prop}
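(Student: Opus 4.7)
My plan is to first establish the estimate under the extra regularity $\partial_x a \in \mathcal{C}^0(\Ot)$, so that Proposition \ref{prop:reg} and its explicit derivative formulas \eqref{XYZ} are available, and then to remove this assumption by a mollification argument based on Proposition \ref{prop:regu}.

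Assume $\partial_x a\in \mathcal{C}^0(\Ot)$. On $I$ we have $e\equiv 0$, so $\nabla e\equiv 0$. On $J$, the denominator $a(e(t,x),0)$ appearing in \eqref{XYZ} is at least $c$ by \eqref{A4}, the exponential factor satisfies $|\exp(-\int_{e(t,x)}^t \partial_x a(r,\phi(r,t,x))\,dr)|\le e^{LT}$ because $\left|\int_{e(t,x)}^t \partial_x a(r,\phi(r,t,x))\,dr\right|\le L(t-e(t,x))\le LT$, and the numerator of $\partial_t e$ is at most $\|a\|_{\mathcal{C}^0(\Ot)}$. Hence $|\partial_t e|\le c^{-1}\|a\|_{\mathcal{C}^0(\Ot)}e^{LT}$ and $|\partial_x e|\le c^{-1}e^{LT}$ on $\Ot\setminus P$, so that $\max(|\partial_t e|,|\partial_x e|)\le \bar{K}$ pointwise on $\Ot\setminus P$.

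To globalize this bound across $P$, I would use that $P$ is the image of the Lipschitz curve $s\mapsto(s,\phi(s,0,0))$, hence has two-dimensional Lebesgue measure zero, together with the fact that $e$ is continuous on all of $\Ot$ (see the remark following Proposition \ref{prop:regu}). Integrating by parts separately over $I$ and $J$ against any test function $\varphi\in \mathcal{C}^\infty_c(\ot)$, the boundary contributions along $P$ cancel by continuity of $e$; thus the distributional gradient of $e$ coincides a.e.\ with its classical gradient, so $e\in W^{1,\infty}(\Ot)$ with $\|\nabla e\|_{L^\infty}\le \bar{K}$. Since $\Ot$ is convex, this yields the $\bar{K}$-Lipschitz bound for the $\ell^1$-metric $(t,x)\mapsto |t_1-t_2|+|x_1-x_2|$.

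For general $a$ satisfying only \eqref{A3}--\eqref{A4}, I would extend $a$ to a neighbourhood of $\Ot$ preserving the lower bound $c$ and the Lipschitz constant $L$ in $x$ (for instance by reflection in $x$ and constant extension in $t$), then mollify in both variables to produce smooth $a_n$ with $a_n\ge c$, $\|a_n\|_{L^\infty(0,T;\Lip([0,1]))}\le L$, $\|a_n\|_{\mathcal{C}^0(\Ot)}\le \|a\|_{\mathcal{C}^0(\Ot)}$ and $a_n\to a$ uniformly on $\Ot$. The smooth case applied to each $a_n$ gives $e_n$ that is $\bar{K}$-Lipschitz with the same constant, and Proposition \ref{prop:regu} together with the compactness of $\Ot$ gives $e_n\to e$ uniformly, so the uniform limit $e$ inherits the $\bar{K}$-Lipschitz property. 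I expect the main subtlety to be the globalization step across $P$ in the smooth case: the continuity of $e$ on all of $\Ot$, furnished by Proposition \ref{prop:regu}, is exactly what prevents the distributional gradient of $e$ from acquiring a singular contribution along $P$.
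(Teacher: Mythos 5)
Your argument is correct, but it takes a genuinely different route from the paper. The paper's proof is a direct two-point estimate that needs neither differentiability of $e$ nor any approximation: assuming $e(t_1,x_1)>e(t_2,x_2)$, it applies Proposition \ref{prop:Lip} to bound $|\phi(e(t_1,x_1),t_1,x_1)-\phi(e(t_1,x_1),t_2,x_2)|$ by $\max(1,\|a\|_{\Cc^0(\Ot)})e^{LT}(|t_1-t_2|+|x_1-x_2|)$, notes that the first term vanishes because $e(t_1,x_1)>0$ forces $\phi(e(t_1,x_1),t_1,x_1)=0$, and bounds the second term from below by $c\,(e(t_1,x_1)-e(t_2,x_2))$ by integrating $\partial_s\phi=a\ge c$ along the characteristic through $(t_2,x_2)$ between the two entrance times; dividing by $c$ gives $\bar K$ in three lines. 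Your route instead derives the same constant as a pointwise bound on $|\partial_t e|$ and $|\partial_x e|$ from the explicit formulas \eqref{XYZ} in the regular case, glues across the measure-zero characteristic $P$ using the continuity of $e$ to rule out a singular part of the distributional gradient, and then removes the regularity hypothesis by mollification and the stability statement of Proposition \ref{prop:regu}. All three steps are sound (the constant is preserved under your mollification since reflection/constant extension and convolution do not increase $\|a\|_{\Cc^0}$ or $L$, and the $\ell^1$-Lipschitz bound does follow from $\max(|\partial_t e|,|\partial_x e|)\le\bar K$ on the convex set $\Ot$). What your approach buys is an explanation of the constant as exactly $\sup|\nabla e|$, and a template that would also yield the $\Cc^1$ formulas; what it costs is considerably more machinery (Propositions \ref{prop:reg} and \ref{prop:regu}, an extension-plus-mollification construction, and a gluing lemma across $P$) for a statement the paper obtains directly from the already-established Lipschitz bound on the flow. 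Note also that the gluing across $P$ could be avoided entirely by arguing case-by-case as the paper does for $y$ in Proposition \ref{prop:weak}: treat pairs of points both in $J\cup P$, both in $I\cup P$, or one in each via an intermediate point on $P$.
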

\begin{proof}
 Consider $(t_1,x_1)$ and $(t_2,x_2)$ in $\Ot$. Let us also suppose that $e(t_1,x_1)>e(t_2,x_2)$, the other case being symmetrical.
We infer from Proposition \ref{prop:Lip} that
\begin{equation}
 |\phi(e(t_1,x_1),t_1,x_1)-\phi(e(t_1,x_1),t_2,x_2)| \leq \max\Big(1,||a||_{\Cc^0(\Ot)}\Big)e^{LT}(|t_1-t_2|+|x_1-x_2|).
\end{equation}
Since $e(t_1,x_1)>0$, we have that $\phi ( e(t_1,x_1), t_1,x_1)=0$, and  
\begin{equation}
 \phi(e(t_1,x_1),t_2,x_2)\geq c (e(t_1,x_1)-e(t_2,x_2)) \geq 0.
\end{equation}
Therefore we end up with
\begin{equation}
 |e(t_1,x_1)-e(t_2,x_2)|\leq \bar{K}(|t_1-t_2|+|x_1-x_2|).
\end{equation}
\end{proof}

\subsection{Strong solutions} \label{subsec:strong}

Let $a\in\COI$, $y_l\in\mathcal{C}^1([0,T])$, and 
$y_0\in\mathcal{C}^1([0,1])$ be given, and assume that the following compatibility conditions hold:
\begin{equation}\label{eq:compCond}
 y_l(0)=y_0(0),\qquad y_l'(0)+a(0,0)y_0'(0)=0.
\end{equation}
We consider the following boundary initial value problem: 
\ba
&&\partial _t y + a(t,x) \partial _x y =0,\quad  (t,x)\in (0,T)\times (0,1), \label{B1}\\
&&y(t,0)= y_l(t), \quad t\in (0,T),\label{B2}\\
&&y(0,x) = y_0(x), \quad x\in (0,1). \label{B3} 
\ea
A {\em strong solution} of \eqref{B1}-\eqref{B3} is any function $y\in \Cc ^1([0,T]\times [0,1])$ such that 
\eqref{B1}-\eqref{B3} hold pointwise. 

We define a function $y:\Ot \to \R$ in the following way:
\begin{equation}\label{eq:FORM}
y(t,x)=
\begin{cases}
y_l(e(t,x)) & \text{ if } (t,x)\in J,\\ 
y_0(\phi(0,t,x)) & \text{ if } (t,x)\in I\cup P.
\end{cases}
\end{equation}
\begin{prop}
\label{prop25}
Let $y$ be as in \eqref{eq:FORM}. Then $y$ is a strong solution of \eqref{B1}-\eqref{B3}. 
Besides, we have the estimates
\begin{gather}
 ||y||_{\mathcal{C}^0(\Ot)}\leq \max\left(||y_0||_{\mathcal{C}^0([0,1])},||y_l||_{\mathcal{C}^0([0,T])}\right), \label{eq:ES}\\
||\nabla y||_{\mathcal{C}^0(\Ot)}\leq \max\left(||y_0'||_{\mathcal{C}^0([0,1])},||y_l'||_{\mathcal{C}^0([0,T])}\frac{||a||_{\mathcal{C}^0(\Ot)}}{c}\right)\exp\left(T||\partial_x a||_{\mathcal{C}^0(\Ot)}\right).
\end{gather}
\end{prop}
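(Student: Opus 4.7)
The plan is to work directly with the formula \eqref{eq:FORM}: verify it is well-defined and of class $\mathcal{C}^1$ on $\Ot$, check that it solves \eqref{B1}--\eqref{B3} pointwise along characteristics, and then read off the two estimates.

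First I would verify that the two branches of \eqref{eq:FORM} agree on the common set $P$. For $(t,x)\in P$ the backward characteristic issues from $(0,0)$, so $e(t,x)=0$ and $\phi(0,t,x)=0$; the first compatibility condition $y_l(0)=y_0(0)$ then makes the two formulas consistent. The $\mathcal{C}^1$ regularity on the open set $I$ (where $y=y_0\circ\phi(0,\cdot,\cdot)$) and on $J$ (where $y=y_l\circ e$) follows from Proposition \ref{prop:reg}, since the extra hypothesis $\partial_x a\in\mathcal{C}^0(\Ot)$ guarantees both $\phi\in\mathcal{C}^1$ and $e\in\mathcal{C}^1(\Ot\setminus P)$. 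The main technical obstacle is $\mathcal{C}^1$ matching across the curve $P$. At a point $(t_0,x_0)\in P$, the one-sided limit of $\partial_x y$ from $I$ equals $y_0'(0)\,\partial_x\phi(0,t_0,x_0)$, while from $J$ it equals $y_l'(0)\,\partial_x e(t_0,x_0)$. Using the variational equation for $\phi$, the explicit formula \eqref{XYZ}, and the observation that on $P$ the backward characteristic from $(t_0,x_0)$ coincides with the one issuing from $(0,0)$, I would derive the identity $\partial_x e(t_0,x_0)=-\partial_x\phi(0,t_0,x_0)/a(0,0)$. The second compatibility condition $y_l'(0)+a(0,0)y_0'(0)=0$ is then precisely what forces the two limits to agree; the analogous computation, together with the transport identity below, handles $\partial_t y$.

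For the PDE, the group property $\phi(0,s,\phi(s,t,x))=\phi(0,t,x)$ means that $(t,x)\mapsto\phi(0,t,x)$ is constant along each characteristic of $a$, hence solves $\partial_t(\cdot)+a\partial_x(\cdot)=0$; composing with $y_0$ gives \eqref{B1} on $I$. The same reasoning, applied to $e$ (which is also invariant along characteristics, being the entry time at $x=0$), gives \eqref{B1} on $J$. Conditions \eqref{B2}--\eqref{B3} are then immediate: for $x>0$ one has $(0,x)\in I$ with $\phi(0,0,x)=x$, and for $t>0$ one has $(t,0)\in J$ with $e(t,0)=t$.

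Finally, since $e(t,x)\in[0,T]$ and $\phi(0,t,x)\in[0,1]$, the bound \eqref{eq:ES} is read directly from \eqref{eq:FORM}. For the gradient estimate, on $I$ I would differentiate the defining ODE of $\phi$ in $x$ to obtain $\partial_x\phi(0,t,x)=\exp\bigl(-\int_0^t\partial_x a(r,\phi(r,t,x))\,dr\bigr)$, and then use the transport identity $\partial_t\phi(0,\cdot,\cdot)+a\,\partial_x\phi(0,\cdot,\cdot)=0$ to bound $|\nabla\phi(0,t,x)|$ by $\max(1,\|a\|_{\mathcal{C}^0(\Ot)})\,\exp(T\|\partial_x a\|_{\mathcal{C}^0(\Ot)})$. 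On $J$, the explicit formulas \eqref{XYZ} give directly $|\nabla e(t,x)|\le c^{-1}\max(1,\|a\|_{\mathcal{C}^0(\Ot)})\,\exp(T\|\partial_x a\|_{\mathcal{C}^0(\Ot)})$. Combining via the chain rule with the factors $\|y_0'\|_{\mathcal{C}^0([0,1])}$ and $\|y_l'\|_{\mathcal{C}^0([0,T])}$ and taking the maximum over the two regions yields the announced bound.
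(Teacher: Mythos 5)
Your proposal is correct and follows essentially the same route as the paper: continuity across $P$ from the first compatibility condition, $\mathcal{C}^1$ matching across $P$ from the second (the paper computes the one-sided directional difference quotients along $P$ directly, which amounts to your identity $\partial_x e=-\partial_x\phi(0,\cdot,\cdot)/a(0,0)$), the PDE by constancy along characteristics, and the bounds read off the explicit derivative formulas. The only cosmetic difference is that the paper writes out the four derivative formulas on $I$ and $J$ explicitly before passing to the limits on $P$.
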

\begin{proof}
One can see that  $y$ is of class $\Cc^1$ on $I$ and $J$,
with the derivatives given by:
\begin{eqnarray*}
&&\partial_t y(t,x)=y_l'(e(t,x)) \frac{a(t,x)}{a(e(t,x),0)}\exp\left(-\int_{e(t,x)}^t{\!\!\!\!\!\!\!\partial_x a(s,\phi(s,t,x))ds}\right),\qquad \forall (t,x)\in J,\\
&&\partial_x y(t,x)=-y_l'(e(t,x)) \frac{1}{a(e(t,x),0)}\exp\left(-\int_{e(t,x)}^t{\!\!\!\!\!\!\!\partial_x a(s,\phi(s,t,x))ds}\right), \qquad \forall (t,x)\in J,\\
&&\partial_t y(t,x)=-y_0'(\phi(0,t,x))a(t,x)\exp\left(-\int_0^t{\partial_x a(s,\phi(s,t,x))ds}\right), \qquad \forall (t,x)\in I,\\
&&\partial_x y(t,x)=y_0'(\phi(0,t,x))\exp\left(-\int_0^t{\partial_x a(s,\phi(s,t,x))ds}\right), \qquad \forall (t,x)\in I. 
\end{eqnarray*}
It follows from the first equation in \eqref{eq:compCond} and the continuity of $e$ that $y$ is continuous at each point of $P$. 
Note that $y$ is differentiable in directions $t$ and $x$ in the following way: for all $t\in (0,f(0,0))$
\begin{gather*}
\underset{h\ra 0^+}{\lim}\frac{y(t+h,\phi(t,0,0))}{h}=y_l'(0) \frac{a(t,\phi(t,0,0))}{a(0,0)}\exp\left(-\int_{0}^t{\partial_x a(s,\phi(s,0,0))ds}\right),\\
\underset{h\ra 0^-}{\lim}\frac{y(t+h,\phi(t,0,0))}{h}=-y_0'(0)a(t,\phi(t,0,0))\exp\left(-\int_0^t{\partial_x a(s,\phi(s,0,0))ds}\right),\\
\underset{h\ra 0^+}{\lim}\frac{y(t,\phi(t,0,0)+h)}{h}=y_0'(0)\exp\left(-\int_0^t{\partial_x a(s,\phi(s,0,0))ds}\right),\\
\underset{h\ra 0^-}{\lim}\frac{y(t,\phi(t,0,0)+h)}{h}=-y_l'(0) \frac{1}{a(0,0)}\exp\left(-\int_{0}^t{\partial_x a(s,\phi(s,0,0))ds}\right).
\end{gather*}
Using the second equation in \eqref{eq:compCond}, we see that $y\in \Cc^1(\Ot)$. The fact that $y$ satisfies \eqref{eq:TR} follows from a straightforward calculation.
\end{proof}

\subsection{Weak solutions}\label{subsec:weak}
Now we consider the case when $a\in \Cc^0(\Ot)$ and $\partial_x a \in L^\infty(\ot)$. We still assume that 
\begin{equation}
a(t,x)\geq c>0, \quad \forall (t,x)\in \Ot .
\end{equation}
  We begin by introducing the space:
\begin{equation}
\mathcal{T}=\{\psi \in \Cc^1(\Ot);\ \psi(t,1)=\psi(T,x)=0\quad   \forall (t,x)\in \Ot\}.
\end{equation}
We say that a function $y\in L^1(\ot)$ is a {\em weak solution} of \eqref{B1}-\eqref{B3} 
if for any $\psi \in \mathcal{T}$ we have
\begin{multline}\label{eq:intsol}
 \iint_{\ot}{\!\!\!\!\!\!\!\!\!\!\!\!\!\!y(t,x)(\psi_t(t,x)+a(t,x) \psi_x(t,x)+a_x(t,x) \psi(t,x))dt dx}\\
 +\int_0^T{\!\!\!\psi(t,0)y_l(t) a(t,0)dt}+\int_0^1{\!\!\!\psi(0,x)y_0(x)dx}=0.
\end{multline}
Using the results of Section \ref{subsec:strong}, it is clear that a strong solution is also a weak solution. Conversely, any weak solution which is in $C^1([0,T]\times [0,1])$ 
 is a strong solution. Note that the definition of weak solution makes sense for $y_l\in L^1(0,T)$ and $y_0\in L^1(0,1)$.
\begin{prop}\label{prop:weak}
 Let us suppose that $a$, $y_l$ and $y_0$ are uniformly Lipschitz continuous 
 with Lipschitz constants $L$, $L_l$ and $L_0$, respectively,  and that $y_l(0)=y_0(0)$. Then the function $y$ defined by
\begin{equation}\label{eq:FORMbis}
y(t,x)=
\begin{cases}
y_l(e(t,x)) & \text{ if } (t,x)\in J,\\ 
y_0(\phi(0,t,x)) & \text{ if } (t,x)\in I\cup P,
\end{cases}
\end{equation}
is a weak solution of \eqref{B1}-\eqref{B3}. Furthermore, $y$ is $M$-Lipschitz continuous on $[0,T] \times [0,1] $ with $M$ defined by
\begin{equation}\label{eq:BornLip}
M:=\max(\frac{L_l}{c},L_0)\max\Big(1,||a||_{\Cc^0(\Ot)}\Big)e^{LT}.
\end{equation}
Finally, $y$ is the unique solution in the class $\text{Lip}([0,T]\times [0,1])$ of system \eqref{B1}-\eqref{B3},
with \eqref{B1} understood in the distributional sense, and \eqref{B2}-\eqref{B3}  pointwise.
\end{prop}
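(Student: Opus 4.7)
The proposition has three parts: the Lipschitz bound, the weak-solution identity, and uniqueness. I would attack them in that order, with a common approximation tool shared by the first two.

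\textbf{Step 1: Lipschitz regularity of $y$.} I would estimate $y$ separately on $J$ and on $I\cup P$, then patch. On $J$, the formula $y=y_l\circ e$ combined with Proposition \ref{prop:regBis} (which gives $\mathrm{Lip}(e)\le c^{-1}\max(1,\|a\|_{\Cc^0})e^{LT}$) yields a Lipschitz bound of $(L_l/c)\max(1,\|a\|_{\Cc^0})e^{LT}$. On $I\cup P$, the formula $y=y_0\circ\phi(0,\cdot,\cdot)$ combined with Proposition \ref{prop:Lip} yields a bound of $L_0\max(1,\|a\|_{\Cc^0})e^{LT}$. For two points $(t_1,x_1)\in J$ and $(t_2,x_2)\in I\cup P$, I would join them by a continuous path in $\Ot$ and use the fact that the set $P$ separates $J$ from $I$ (Figure \ref{fig1}); choosing a crossing point $(t^*,x^*)\in P$ on the segment, the two formulas give the same value $y_l(0)=y_0(0)$ there, so the Lipschitz estimates on each side add up to the desired bound $M$. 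In particular $y$ is continuous across $P$.

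\textbf{Step 2: $y$ is a weak solution, by regularization.} I would pick sequences $a_n\in\COI$, $y_l^n\in\Cc^1([0,T])$, $y_0^n\in\Cc^1([0,1])$ converging uniformly to $a,y_l,y_0$, with $\|a_n\|_{L^\infty(0,T;\mathrm{Lip})}$, $\|(y_l^n)'\|_\infty$, $\|(y_0^n)'\|_\infty$ uniformly bounded by (slightly inflated multiples of) $L,L_l,L_0$, and tuned near $t=x=0$ so that the $\Cc^1$ compatibility conditions \eqref{eq:compCond} hold; this is achieved by convolution with a mollifier combined with a small corrective modification on a shrinking neighborhood of $(0,0)$. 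Proposition \ref{prop25} gives a strong, hence weak, solution $y_n$ of \eqref{B1}--\eqref{B3} with data $(a_n,y_l^n,y_0^n)$, represented by \eqref{eq:FORMbis} with $e_n,\phi_n$. By Proposition \ref{prop:regu} $e_n\to e$ uniformly on $\Ot$, and by \eqref{eq:Gron} $\phi_n(0,\cdot,\cdot)\to\phi(0,\cdot,\cdot)$ uniformly on the appropriate domain; this forces $y_n\to y$ uniformly on $\Ot$. Passing to the limit in the integral identity \eqref{eq:intsol} written for $(y_n,a_n,y_l^n,y_0^n)$ then gives the identity for $(y,a,y_l,y_0)$, for every $\psi\in\mathcal T$.

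\textbf{Step 3: Uniqueness.} Let $y_1,y_2$ be two Lipschitz weak solutions of \eqref{B1}--\eqref{B3} and set $w:=y_1-y_2$. The most robust route is to note that, $w$ being Lipschitz, Rademacher's theorem gives $\partial_t w,\partial_x w\in L^\infty(\ot)$, and integration by parts in \eqref{eq:intsol} (with zero data) yields $\partial_t w+a\partial_x w=0$ almost everywhere, $w(0,\cdot)=0$, and $w(\cdot,0)=0$ in the pointwise sense. Along each characteristic $s\mapsto\phi(s,t,x)$, the map $s\mapsto w(s,\phi(s,t,x))$ is Lipschitz with a.e.\ derivative equal to $(\partial_tw+a\partial_xw)(s,\phi(s,t,x))=0$, hence constant. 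Running this characteristic back to either $\{s=0\}$ (on $I\cup P$) or to $\{x=0\}$ at time $s=e(t,x)$ (on $J$), we obtain $w(t,x)=0$ for every $(t,x)\in\Ot$.

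\textbf{Main obstacle.} The delicate point is Step~2: matching both the uniform bounds on $\|a_n\|_{L^\infty(0,T;\mathrm{Lip})}$, $\|(y_0^n)'\|_\infty$, $\|(y_l^n)'\|_\infty$ \emph{and} the $\Cc^1$ compatibility condition \eqref{eq:compCond}, while still getting uniform convergence of the data. A secondary subtlety in Step~3 is that $a_x$ is only in $L^\infty$, so a direct adjoint-equation argument would require extra smoothing of $a$; the Rademacher/characteristics argument sketched above sidesteps this.
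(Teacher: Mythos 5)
Your Steps 1 and 2 follow essentially the same route as the paper: the Lipschitz bound is obtained exactly as in the paper (Proposition \ref{prop:regBis} on $J$, Proposition \ref{prop:Lip} on $I\cup P$, and patching through a point of $P$ using $y_l(0)=y_0(0)$), and the weak-solution identity is obtained by the same regularization scheme --- smooth data with uniformly bounded Lipschitz constants, forced to satisfy \eqref{eq:compCond} by a local modification at the corner (the paper simply imposes ${y_0^n}'(0)={y_l^n}'(0)=0$), followed by Proposition \ref{prop25}, convergence of $e_n$ and $\phi_n$ via Proposition \ref{prop:regu} and \eqref{eq:Gron}, and passage to the limit in \eqref{eq:intsol}. (The paper only claims pointwise convergence of $y^n$ on $I\cup J$ plus dominated convergence, which suffices; your stronger claim of uniform convergence would need the equicontinuity of the $y^n$, but this is available and is not a real issue.)

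Your Step 3 genuinely departs from the paper. The paper multiplies the equation for $\hat y=y_1-y_2$ by $2\hat y$, integrates by parts (legitimate since $a\in W^{1,\infty}$ and $\hat y\in\mathrm{Lip}$, so no extra smoothing of $a$ is needed, contrary to your closing remark), discards the sign-favorable boundary term at $x=1$, and concludes by Gronwall. Your characteristics argument is attractive but as written has a gap: from ``$\partial_t w+a\,\partial_x w=0$ a.e.\ in $\ot$'' you cannot conclude that $s\mapsto w(s,\phi(s,t,x))$ has vanishing a.e.\ derivative along \emph{each} characteristic, because a single characteristic is a 2-D null set and could lie entirely inside the exceptional set where $w$ fails to be differentiable or the equation fails. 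The repair is standard but must be said: the flow map $(s,x_0)\mapsto(s,\phi(s,0,x_0))$ is bi-Lipschitz, hence preserves null sets, so by Fubini the equation holds (and $w$ is differentiable, so the chain rule applies) at a.e.\ time along \emph{almost every} characteristic; this gives $w=0$ on a full-measure union of characteristics, and continuity of $w$ upgrades this to $w\equiv 0$ on $\Ot$. With that amendment your uniqueness proof is correct; the paper's energy method buys a shorter argument that avoids any discussion of the chain rule for Lipschitz functions, while yours is more geometric and would extend to situations where an $L^2$ estimate is less convenient.
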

\begin{proof}
Using standard regularization arguments, it is possible to find $a^n\in \Cc^1(\Ot)$, $y_0^n \in \Cc^1([0,1])$ and $y_l^n\in \Cc^1 ([0,1])$ such that:
\begin{gather}
\forall \epsilon \in (0,\min(T,1))\quad 
||a^n-a||_{\Cc^0(\Ot)}+||y_0^n-y_0||_{\Cc^0([\epsilon,1])}+||y_l^n-y_l||_{\Cc^0([\epsilon,T])}\underset{n\ra +\infty}{\ra} 0 ,\\
||a_n||_{\Cc^0(\Ot)}\leq  2 ||a||_{\Cc^0(\Ot)},\quad ||\partial_x a_n ||_{L^\infty(\ot)}\leq 2 ||\partial_x a||_{L^\infty(\ot)},\\
||y_0^n||_{\Cc^0([0,1])}\leq 2 ||y_0||_{\Cc^0([0,1])},\quad ||{y_0^n}'||_{L^\infty(0,1)}\leq  2 ||y_0'||_{L^\infty(0,1)},\\
||y_l^n||_{\Cc^0([0,T])}\leq 2 ||y_l||_{\Cc^0([0,T])},\quad ||{y^n_l}' ||_{L^\infty(0,T)}\leq 2 ||y_l'||_{L^\infty(0,T)},\\
y_0^n(0)=y_l^n(0),\ \ {y_0^n}'(0)={y_l^n}'(0)=0 \qquad \forall n\in\N .
\end{gather}
Using Proposition  \ref{prop25} we infer the existence of a strong solution $y^n\in \Cc ^1( [0,T]\times [0,1] )$ of
\begin{equation}
\begin{cases}
\partial_t y^n +a^n \partial_x y^n=0,\\
y^n(t,0)=y^n_l(t),\qquad y^n(0,x)=y^n_0(x),  
 \end{cases}\forall (t,x)\in \ot.
\end{equation}
$y^n$ is given by \eqref{eq:FORM}, with $y_l,y_0,e$ and $\phi$ replaced by $y_l^n,y_0^n,e^n$ and $\phi ^n$, respectively.
Note that $(t,x)\in I^n$ (resp. $(t,x)\in J^n$) for $n$ large enough if $(t,x)\in I$ (resp. $(t,x)\in J$). 
Using Proposition  
\ref{prop:regu}, \eqref{eq:Gron} and \eqref{eq:FORM}, we see that
\begin{equation}
y^n(t,x) \underset{n \ra +\infty}{\ra} y(t,x),\qquad \forall (t,x)\in I\cup J.
\end{equation}
Note that $P=\Ot\setminus (I\cup J)$ has zero Lebesgue measure. 
An application of the dominated convergence theorem yields
$$||y^n-y||_{L^1(\ot)}\underset{n\ra +\infty}{\ra}0.$$
Using the other convergence assumptions about $a^n$, $y^n_l$, and $y^n_0$, we can pass to the limit in \eqref{eq:intsol}. This shows that 
$y$ is a weak solution of \eqref{eq:TR}.\par
To prove the regularity of $y$ we distinguish two cases.\\
Assume first that  both $(t_1,x_1)$ and $(t_2,x_2)$ are in $J\cup P$.  Using \eqref{eq:FORM} and Proposition \ref{prop:regBis}, we have that
\begin{align*}
|y(t_1,x_1)-y(t_2,x_2)|&=|y_l(e(t_1,x_1))-y_l(e(t_2,x_2))|\\
&\leq L_l |e(t_1,x_1)-e(t_2,x_2))|\\
& \leq \frac{L_l}{c} \max\Big(1,||a||_{\Cc^0(\Ot)}\Big)e^{LT}(|t_1-t_2|+|x_1-x_2|).
\end{align*}
Next, if we assume that $(t_1,x_1)$ and $(t_2,x_2)$ are in $I\cup P$, then we can use \eqref{eq:FORM} and Proposition \ref{prop:Lip} to obtain that
\begin{align*}
 |y(t_1,x_1)-y(t_2,x_2)|&=|y_0(\phi(0,t_1,x_1))-y_0(\phi(0,t_2,x_2))|\\
& \leq L_0 |\phi(0,t_1,x_1)-\phi(0,t_2,x_2)|\\
& \leq L_0 \max\Big(1,||a||_{\Cc^0(\Ot)}\Big)e^{LT}(|t_1-t_2|+|x_1-x_2|).
\end{align*}
Finally, if $(t_1,x_1)\in J$ and $(t_2,x_2)\in I$,
 we consider an intermediate point on $P$ belonging to the boundary of the rectangle $[  \min (t_1,t_2),\max (t_1,t_2) ] \times  [\min (x_1,x_2),\max (x_1,x_2) ] $ and use the 
 estimates  above.
 
Let us now check that $y$ is the only solution to \eqref{B1}-\eqref{B3} in the class $\text{Lip} (\Ot )$. First, 
picking any $\psi \in \Cc _0^\infty ((0,T)\times (0,1))$ in
\eqref{eq:intsol}, we see that \eqref{B1} holds in ${\mathcal D}'((0,T)\times (0,1))$. Note that each term in \eqref{B1} belongs to 
$L^\infty (\ot )$, so that \eqref{B1} holds also pointwise a.e. Scaling in \eqref{B1} by $\psi \in {\mathcal T}$ and comparing to 
\eqref{eq:intsol}, we obtain that \eqref{B2} and \eqref{B3} hold a.e., and also everywhere by continuity of $y_l$, $y_0$, and $y$.
Thus $y$ solves \eqref{B1}-\eqref{B3}. If $\tilde y  \in \text{Lip} (\Ot )$ is another solution of \eqref{B1}-\eqref{B3}, then 
$\hat y := y-\tilde y \in \text{Lip} (\Ot )$ solves
\ba
\partial _t \hat y + a(t,x) \partial _x \hat y &=& 0  \quad \text{ in } {\mathcal D}' ((0,T)\times (0,1)), \label{D1}\\
\hat y(t,0) &=& 0 \quad \text{ in } (0,T), \label{D2} \\
\hat y(0,x) &=& 0 \quad \text{ in } (0,1). \label{D3}   
\ea
Scaling in \eqref{D1} by $2\hat y$, integrating by parts and using  \eqref{A4}, \eqref{D2}, and \eqref{D3}, we obtain
\[
||\hat y(t)||_2^2 = \int_0^t \!\!\! \int_0^1 (\partial _x a) | \hat y|^2 dxds - \int_0^t a(t,1) | \hat y (t,1)|^2 dt  \le L\int_0^t ||\hat y (s) ||_2 ^2 ds. 
\]
This yields $\hat y \equiv 0$ by Gronwall's lemma.  The proof of Proposition \ref{prop:weak} is complete.
\end{proof}

\section{Finite-time boundary stabilization of a system of two conservation laws}
In this section, we consider the system
\begin{equation}\label{eq:sys}
 \begin{cases}
  \partial_t u+\lambda(u,v) \partial_x u=0,\\
\partial_t v +\mu(u,v) \partial_x v=0,
 \end{cases}
(t,x)\in (0,+\infty)\times (0,1).
\end{equation}
where $\lambda$ and $\mu$ are given functions with
\begin{eqnarray}
&&\lambda,\mu \in \Cc ^\infty (\R ^2,\R), \label{A1}\\
&&\mu(u,v)\leq -c<0<c\leq \lambda(u,v), \qquad \forall (u,v)\in \R^2  \label{A2}
\end{eqnarray}
for some constant $c>0$. 
We aim to prescribe a control in a feedback form on the boundary conditions $u(t,0)$ and $v(t,1)$ 
so that for some time $T$ we have for any small (in $\text{\rm Lip} ([0,1])$) initial data $u_0$ and $v_0$
\begin{equation}\label{eq:finStab}
u(T,x)=v(T,x)=0,\qquad \forall x\in (0,1).
\end{equation}
\begin{Rk}
\begin{enumerate}
\item If we intend to stabilize the system around a non null (but constant) equilibrium state $(\bar{u},\bar{v})\in \R ^2$, it is sufficient to consider the new unknowns 
$\tilde u:= u-\bar{u}$, $\tilde v := v-\bar{v}$ that satisfy a system similar to \eqref{eq:sys}, and to stabilize $(\tilde{u},\tilde{v} )$ around $(0,0)$.
\item  Note that, since we are only interested in proving a local stabilization result, the condition \eqref{A2} is not too much restrictive. It should be seen as 
$\lambda(\bar{u},\bar{v}) >0$ and $\mu(\bar{u},\bar{v}) < 0$.
\end{enumerate}
\end{Rk}
After introducing the boundary feedback law, we will show the existence and uniqueness of the solution to the closed loop system and check that
the property \eqref{eq:finStab} indeed holds for this choice of feedback law. 

We now come back to the quasilinear system \eqref{eq:sys} that we complete as follows:
\ba
&&\label{eq:closedSys}
 \begin{cases} 
  \partial_t u+\lambda(u,v) \partial_x u=0, \qquad (t,x)\in (0,+\infty)\times (0,1),\\
\partial_t v +\mu(u,v) \partial_x v=0, \qquad (t,x)\in (0,+\infty)\times (0,1),\\
 \end{cases} \\
 \label{C1}
&&\begin{cases}
\displaystyle \frac{\mathrm{d}}{\mathrm{d}t}u(t,0)=-K\mathrm{sgn}(u(t,0))|u(t,0)|^\gamma,\qquad  t > 0,\\[3mm] 
 \displaystyle    \frac{\mathrm{d}}{\mathrm{d}t}v(t,1)=-K\mathrm{sgn}(v(t,1))|v(t,1)|^\gamma, \qquad t >  0,
\end{cases} \\
&&\ \ u(0,x)=u_0(x),\ \  v(0,x) =v_0(x),\qquad  x\in (0,1) \label{CC1}
\ea
with $(K,\gamma)\in (0,+\infty)\times (0,1)$ arbitrarily chosen. We aim to use Schauder fixed-point theorem to prove the local in time existence
of solutions $(u,v)$ of \eqref{eq:closedSys}-\eqref{CC1} in some class of Lipschitz continuous functions. 
By {\em  solution}, we mean that \eqref{eq:closedSys} is satisfied in the distributional sense, and that \eqref{C1}-\eqref{CC1} are satisfied pointwise.
Actually, we shall use the results of the previous section and define
$u$ as the weak solution of the transport equation \eqref{B1}-\eqref{B3} with $a(t,x)= \lambda (\tilde u(t,x),\tilde v(t,x))$ for some given pair $(\tilde u,\tilde v)$ in the same class,
$y_l(t)=u_l(t)$ (see below \eqref{eq:fp2}), and $y_0(x)=u_0(x)$, and similarly for $v$. 

\subsection{Notations}
Let $C_1>0$ and $C_2>0$ be given, and pick any $u_0,v_0\in \Lip([0,1])$ with
\begin{gather}
\max(||u_0||_\infty,||v_0||_\infty) \le C_1,\label{eq:BornInit}\\
\max(||u_0'||_\infty,||v_0'||_\infty)\le C_2 .\label{eq:BornInitGrad}
\end{gather}
Let 
\[
T:=\frac{1}{c}+\frac{C_1^{1-\gamma}}{(1-\gamma ) K} \cdot
\]
We define $u_l$ and $v_r$ as the solutions of the following ODEs
\begin{equation}\label{eq:fp2}
   \begin{cases}
    \displaystyle \frac{\mathrm{d}}{\mathrm{d}t}u_l(t)=-K\mathrm{sgn}(u_l(t))|u_l(t))|^\gamma,\quad u_l(0)=u_0(0),\\[3mm] 
    \displaystyle \frac{\mathrm{d}}{\mathrm{d}t}v_r(t)=-K\mathrm{sgn}(v_r(t))|v_r(t)|^\gamma,\quad v_r(0)=v_0(1).\\
   \end{cases}
\end{equation} 
 An obvious calculation gives 
 \begin{eqnarray*}
 u_l(t) = \left\{
 \begin{array}{ll}
 \text{sgn} (u_0(0)) \left( |u_0(0)| ^{1-\gamma} -(1-\gamma ) K t \right) ^{\frac{1}{1-\gamma }} \quad &\text{ if } 0\le t\le \frac{|u_0(0) |^{1-\gamma}}{(1-\gamma ) K} ,\\
 0 & \text{ if } t\ge \frac{|u_0(0) |^{1-\gamma}}{(1-\gamma ) K} ,
 \end{array} 
 \right.
 \end{eqnarray*} 
 and 
  \begin{eqnarray*}
 v_r(t) = \left\{
 \begin{array}{ll}
 \text{sgn} (v_0(1)) \left( |v_0(1)| ^{1-\gamma} -(1-\gamma ) K t \right) ^{\frac{1}{1-\gamma }} \quad &\text{ if } 0\le t\le \frac{|v_0(1) |^{1-\gamma}}{(1-\gamma ) K} ,\\
 0 & \text{ if } t\ge \frac{|v_0(1) |^{1-\gamma}}{(1-\gamma ) K} ,
 \end{array} 
 \right.
 \end{eqnarray*} 
 Clearly
\begin{align}
   &\forall t\geq T-\frac{1}{c},\quad v_r(t)=u_l(t)=0,\\
   &\max(||u_l||_\infty,||v_r||_\infty)\leq C_1,\label{eq:BornGauche}\\
   &\max(||u_l'||_\infty,||v_r'||_\infty)\leq K C_1^\gamma \label{eq:BornGaucheGrad}.
\end{align}
Let us also introduce
\begin{align}
M_1&:=\max\Big(||\lambda||_{\Cc^0([-C_1,C_1]^2)},||\mu||_{\Cc^0([-C_1,C_1]^2)}\Big),\label{eq:BornVP}\\
M_2&:=\max\Big(||\partial_u \mu||_{\Cc^0([-C_1,C_1]^2)},||\partial_v \mu||_{\Cc^0([-C_1,C_1]^2)},||\partial_u \lambda||_{\Cc^0([-C_1,C_1]^2)},
||\partial_v \lambda||_{\Cc ^0([-C_1,C_1]^2)}\Big)\label{eq:BornVPGrad}.
\end{align}
Let us pick a positive number $C_3$. Let  $\mathcal{D}$ denote the domain
\begin{equation}\label{eq:DefD}
   \mathcal{D}:=\Big\{(u,v)\in \Lip([0,T]\times [0,1])^2; \ \max(||u||_\infty,||v||_\infty)\leq C_1, \text{ and } u \text{ and }v\text{ are }C_3\text{-Lipschitz} \Big\}.
\end{equation} 
Let us equip the domain $\mathcal{D}$ with the topology of the uniform convergence. Then,  by Ascoli-Arzela theorem, $\mathcal D$
 is a  compact set in $\Cc^0(\Ot)^2$. 

The main result in this section is the following
\begin{thm}
\label{main1} Assume that  $C_1>0$ and $C_2>0$ are such that 
\be 
\label{W1}
T M_2 \max\Big(1,M_1\Big)\max\Big(\frac{K C_1^\gamma}{c}, C_2\Big)\leq \frac{1}{2 e}
\ee
and let $C_3= (2T M_2)^{-1}$. Pick any pair $(u_0,v_0)\in \Lip ([0,1])^2$ satisfying \eqref{eq:BornInit}-\eqref{eq:BornInitGrad}. Then 
there exists a unique solution $(u,v)$ of \eqref{eq:closedSys}-\eqref{CC1} in the class $\mathcal{D}$. Furthermore, the solution is global in time
with $u(t,.)=v(t,.)=0$ for $t\ge T$. Finally, the equilibrium state $(0,0)$ is stable in $\text{Lip}([0,1])^2$ for \eqref{eq:closedSys}-\eqref{CC1}; that is
\be
\label{stab1}
||(u,v)||_{L^\infty (\R ^+; \text{\rm Lip}([0,1])^2)} \to 0 \ \ \text{ as } \ \ ||(u_0,v_0)||_{\text{\rm Lip}([0,1])^2}\to 0. 
\ee
\end{thm}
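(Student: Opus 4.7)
The plan is a standard Schauder fixed-point argument, using the linear theory of Section 2 to ``decouple'' the quasilinear system.  Given $(\tilde u,\tilde v)\in\mathcal{D}$, I would set
\[
a(t,x):=\lambda(\tilde u(t,x),\tilde v(t,x)), \qquad b(t,x):=-\mu(\tilde u(t,x),\tilde v(t,x)),
\]
which both satisfy \eqref{A3}--\eqref{A4} with the constant $c$ (note $b$ is positive after the symmetric change of variable $x\mapsto 1-x$ for the $v$-equation).  By Proposition \ref{prop:weak}, the linear transport problems
\[
\partial_t u+a\,\partial_x u=0,\quad u(t,0)=u_l(t),\quad u(0,x)=u_0(x),
\]
and its analogue for $v$ with boundary data $v_r$ on the right admit unique Lipschitz weak solutions (the compatibility $u_l(0)=u_0(0)$, $v_r(0)=v_0(1)$ holds by construction of \eqref{eq:fp2}). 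This defines the map $F(\tilde u,\tilde v)=(u,v)$ on the compact convex set $\mathcal D$.

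The first real step is to show $F$ leaves $\mathcal D$ invariant. The $L^\infty$-bound is immediate from \eqref{eq:ES} and \eqref{eq:BornGauche}. For the Lipschitz bound one applies \eqref{eq:BornLip}: the Lipschitz constant of $a$ in $x$ is at most $2M_2C_3$ (since $\tilde u,\tilde v$ are $C_3$-Lipschitz and $\lambda$ has derivatives bounded by $M_2$ on $[-C_1,C_1]^2$), $\|a\|_\infty\le M_1$, $\|u_0'\|_\infty\le C_2$ and $\|u_l'\|_\infty\le KC_1^\gamma$ by \eqref{eq:BornGaucheGrad}. Hence
\[
\mathrm{Lip}(u)\ \le\ \max\Bigl(\tfrac{KC_1^\gamma}{c},C_2\Bigr)\max(1,M_1)\,e^{2M_2C_3 T}.
\]
With the prescribed choice $C_3=(2TM_2)^{-1}$ the exponential factor equals $e$, and hypothesis \eqref{W1} is precisely what is needed to make the right-hand side $\le C_3$. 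The same bound holds for $v$, so $F(\mathcal D)\subset\mathcal D$.

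The second step is continuity of $F$ for the $\mathcal{C}^0$-topology. If $(\tilde u_n,\tilde v_n)\to(\tilde u,\tilde v)$ uniformly, then $a_n\to a$ and $\mu_n\to\mu$ uniformly while the Lipschitz bounds on $a_n$ stay uniform. Proposition \ref{prop:regu} (together with the remark following it) then gives $e_n\to e$ uniformly on $\Ot$, and \eqref{eq:Gron} gives $\tilde\phi_n\to\tilde\phi$; the explicit formula \eqref{eq:FORMbis} and the continuity of $u_l$ and $u_0$ yield $u_n\to u$ uniformly, and similarly for $v$. Schauder's theorem then provides a fixed point, which is a solution of \eqref{eq:closedSys}--\eqref{CC1} in $\mathcal D$.

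Uniqueness is via an $L^2$ energy estimate on differences: if $(u_i,v_i)$, $i=1,2$, are two solutions in $\mathcal D$, then $\hat u=u_1-u_2$ and $\hat v=v_1-v_2$ satisfy linear transport equations with an inhomogeneity of the form $-(\lambda(u_1,v_1)-\lambda(u_2,v_2))\partial_x u_2$, bounded in $L^\infty$ by a constant times $|\hat u|+|\hat v|$. The boundary data agree (both sides come from the same ODE \eqref{eq:fp2} starting from the same value $u_0(0)$, and monotonicity ensures uniqueness of the finite-time-stable trajectory), so $\hat u(t,0)=\hat v(t,1)=0$. Multiplying by $\hat u$, $\hat v$ and integrating by parts as at the end of the proof of Proposition \ref{prop:weak}, the boundary terms have good signs thanks to \eqref{A2}, and Gronwall's lemma gives $\hat u\equiv \hat v\equiv 0$.

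Finite-time extinction is essentially geometric. After time $T-1/c$, the boundary data $u_l$ and $v_r$ vanish identically. For any $(t,x)$ with $t\ge T$, trace the $u$-characteristic backwards: since $\lambda\ge c$, it exits $[0,1]$ through the left boundary at some time $e(t,x)\in[t-1/c,t]\subset[T-1/c,t]$, so $u(t,x)=u_l(e(t,x))=0$; symmetrically for $v$. Iterating the fixed-point construction on $[T,2T]$, $[2T,3T]$, etc.\ (now with zero initial and boundary data) gives the global-in-time zero extension. Finally, stability \eqref{stab1} follows from the bounds above: when $C_1,C_2\to 0$, the quantities $KC_1^\gamma/c$ and $C_2$ tend to $0$, so $\|(u,v)\|_{L^\infty(\R^+;\mathrm{Lip}([0,1])^2)}\to 0$.

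The main obstacle I anticipate is the bookkeeping in the invariance step: the nonlinear coupling forces one to choose $C_3$ and $T$ in terms of each other so that the exponential $e^{LT}$ in \eqref{eq:BornLip} does not blow up the Lipschitz constant of the image. The clever choice $C_3=(2TM_2)^{-1}$ in the statement is precisely what closes the loop, and condition \eqref{W1} is exactly the smallness assumption making the fixed-point argument work.
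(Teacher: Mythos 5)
Your proposal is correct and follows essentially the same route as the paper: the same Schauder fixed-point scheme on the compact convex set $\mathcal D$, the same invariance computation leading to the optimal choice $C_3=(2TM_2)^{-1}$ and condition \eqref{W1}, the same continuity argument via Proposition \ref{prop:regu} and \eqref{eq:Gron}, the same $L^2$-Gronwall uniqueness estimate, and the same characteristic-tracing argument for extinction. The only cosmetic differences are that the paper upgrades pointwise a.e.\ convergence to uniform convergence via compactness of $\mathcal D$ rather than claiming it directly, and simply extends the solution by zero for $t\ge T$ instead of re-running the fixed point on later intervals.
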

The first task consists in constructing a solution of the closed loop system as a fixed point of a certain operator.
\subsection{Definition of the operator}
If $(\tu,\tv)\in \mathcal{D}$ are given,  we define $(u,v)=\mathcal{F}(\tu,\tv)$ as follows:
the function $u$ is the weak solution of the system
\begin{equation}\label{eq:TransDroit}
\begin{cases}
 \partial_t u+\lambda(\tu,\tv)\partial_x u=0,\\
u(t,0)=u_l(t),\qquad u(0,x)=u_0(x),
\end{cases}
\forall (t,x)\in [0,T]\times [0,1],
\end{equation}
and the function $v$ is the weak solution of the system
\begin{equation}\label{eq:TransGauche}
\begin{cases}
 \partial_t v+\mu(\tu,\tv)\partial_x v=0,\\
v(t,1)=v_r(t),\qquad v(0,x)=v_0(x),
\end{cases}
\forall (t,x)\in [0,T]\times [0,1].
\end{equation}

\subsection{Stability of the domain}
In this part, we show that for a certain choice of $C_1,\ C_2,\ C_3$, we have 
$$\mathcal{F}(\mathcal{D})\subset \mathcal{D}.$$ 
We first apply the results of Section 2 to get the following
\begin{lem}
\label{lem1}
Let $C_1,C_2,C_3$ be any positive numbers, and let $u_0,v_0\in Lip ([0,1])$ satisfying   \eqref{eq:BornInit}-\eqref{eq:BornInitGrad}.
For given $(\tu,\tv)\in \mathcal{D}$, let  $(u,v)=\mathcal{F}(\tu,\tv)$. Then the functions $u$ and $v$ are Lipschitz continuous on $[0,T]\times [0,1]$ and they 
satisfy the following estimates
\begin{gather}
\max(||u||_{\Cc^0(\Ot)},||v||_{\Cc^0(\Ot)})\leq C_1,\label{eq:direct}\\
\max(||\partial_x u||_{L^\infty(\ot)},||\partial_x v||_{L^\infty(\ot)},||\partial_t u||_{L^\infty(\ot)},||\partial_t v||_{L^\infty(\ot)})\hspace{1.4cm}\notag\\
\hspace{7cm} \leq \max\Big(\frac{K C_1^\gamma}{c}, C_2\Big)\max\Big(1,M_1\Big) \exp\Big(2T M_2 C_3  \Big).\label{eq:grad} 
\end{gather}
\end{lem}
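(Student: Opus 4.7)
The idea is to apply Proposition \ref{prop:weak} separately to the two linear transport equations \eqref{eq:TransDroit} and \eqref{eq:TransGauche}, whose coefficient fields are frozen from the data $(\tu,\tv)\in\mathcal D$, with a reflection $x\mapsto 1-x$ used for the second one in order to recover a positive speed.

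Consider first the equation defining $u$, and set $a(t,x):=\lambda(\tu(t,x),\tv(t,x))$. Since $\|\tu\|_\infty,\|\tv\|_\infty\le C_1$, the definitions \eqref{eq:BornVP}--\eqref{eq:BornVPGrad} give $\|a\|_{\mathcal C^0(\Ot)}\le M_1$, and the chain rule combined with the $C_3$-Lipschitz character of $\tu$ and $\tv$ yields a joint time--space Lipschitz constant for $a$ bounded by $L:=2M_2 C_3$. The positivity $a\ge c>0$ is immediate from \eqref{A2}; the boundary datum $u_l$ is $KC_1^\gamma$-Lipschitz by \eqref{eq:BornGaucheGrad}; the initial datum $u_0$ is $C_2$-Lipschitz by \eqref{eq:BornInitGrad}; and the compatibility $u_l(0)=u_0(0)$ is built into the definition \eqref{eq:fp2}. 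Proposition \ref{prop:weak} therefore produces a unique Lipschitz continuous weak solution $u$ of \eqref{eq:TransDroit}, given by the representation formula \eqref{eq:FORMbis}.

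From that representation, $\|u\|_{\mathcal C^0(\Ot)}\le\max(\|u_l\|_\infty,\|u_0\|_\infty)\le C_1$ by \eqref{eq:BornInit} and \eqref{eq:BornGauche}, which is the bound \eqref{eq:direct} for $u$. Plugging $L_l=KC_1^\gamma$, $L_0=C_2$, $\|a\|_{\mathcal C^0}\le M_1$ and $L\le 2M_2 C_3$ into the Lipschitz estimate \eqref{eq:BornLip} yields
\[
\max(\|\partial_t u\|_{L^\infty(\ot)},\|\partial_x u\|_{L^\infty(\ot)})\le \max\Bigl(\frac{KC_1^\gamma}{c},C_2\Bigr)\max(1,M_1)\exp(2TM_2C_3),
\]
which is \eqref{eq:grad} for $u$.

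For $v$, the transport speed $\mu(\tu,\tv)\le -c$ has the wrong sign for Proposition \ref{prop:weak}, so following the remark after \eqref{A4} I would set $\hat v(t,x):=v(t,1-x)$. Then $\hat v$ satisfies a transport equation with coefficient $\hat a(t,x):=-\mu(\tu(t,1-x),\tv(t,1-x))\ge c$ and boundary-initial data $\hat v(t,0)=v_r(t)$, $\hat v(0,x)=v_0(1-x)$. The bounds on $\hat a$, $v_r$ and $v_0(1-\cdot)$ are exactly the analogues of those obtained for $a$, $u_l$ and $u_0$ (with the same constants $M_1,M_2,L,L_l,L_0$ up to the same substitutions), and the compatibility $v_r(0)=v_0(1)$ holds by \eqref{eq:fp2}. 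Proposition \ref{prop:weak} then delivers the same two estimates for $\hat v$, which translate back into \eqref{eq:direct} and \eqref{eq:grad} for $v$.

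I expect the only non-routine step to be the control of the Lipschitz constant of $a$ and $\hat a$, where it is essential to exploit the joint time--space $C_3$-Lipschitz regularity encoded in the definition \eqref{eq:DefD} of $\mathcal D$ (rather than just a pointwise-in-$t$ spatial Lipschitz bound); once the constant $L\le 2M_2C_3$ is in hand, the remainder of the proof is a direct specialisation of Proposition \ref{prop:weak}.
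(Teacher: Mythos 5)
Your proof is correct and follows essentially the same route as the paper: the sup bound \eqref{eq:direct} comes from the representation formula \eqref{eq:FORMbis} together with \eqref{eq:BornInit} and \eqref{eq:BornGauche}, and \eqref{eq:grad} is exactly \eqref{eq:BornLip} applied to \eqref{eq:TransDroit} and \eqref{eq:TransGauche} with $L_l=KC_1^\gamma$, $L_0=C_2$, $\|a\|_{\mathcal C^0}\le M_1$ and $L\le 2M_2C_3$. The paper states this in two lines; your write-up merely makes explicit the chain-rule bound on the Lipschitz constant of $\lambda(\tu,\tv)$, $\mu(\tu,\tv)$ and the reflection $x\mapsto 1-x$ for the $v$-equation, both of which are exactly what the authors intend.
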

\begin{proof}
Estimate \eqref{eq:direct} follows directly from \eqref{eq:TransDroit}, \eqref{eq:TransGauche}, \eqref{eq:FORMbis}, \eqref{eq:BornInit} and \eqref{eq:BornGauche}.\par
Estimate \eqref{eq:grad} can be deduced applying \eqref{eq:BornLip} for \eqref{eq:TransDroit} and \eqref{eq:TransGauche}, and using 
\eqref{eq:BornInitGrad}, \eqref{eq:BornGaucheGrad}, \eqref{eq:BornVP}, \eqref{eq:BornVPGrad} and \eqref{eq:DefD}.
\end{proof}
Thanks to Lemma \ref{lem1},  we see that the domain $\mathcal D $ is stable by $\mathcal F$ as soon as
\begin{equation}\label{eq:StabCond}
 \max\Big(\frac{K C_1^\gamma}{c}, C_2\Big)\max\Big(1,M_1\Big) \exp\Big(2T M_2 C_3  \Big)\leq C_3.
\end{equation}
This can be written as
\begin{equation}
\label{C2}
\max\Big(\frac{K C_1^\gamma}{c}, C_2\Big)\leq \frac{C_3 \exp\Big(-2T M_2 C_3  \Big)}{\max\Big(1,M_1\Big)}.
\end{equation}
For given $C_1$ and $C_2$, $T,M_1$ and $M_2$ are fixed. Note that $T$, $M_1$ and $M_2$ are independent of $C_2$, and that they are 
nondecreasing in $C_1$.
Therefore, as a function of $C_3$ the supremum of the right-hand side of \eqref{C2} is attained for $C_3= (2T M_2)^{-1}$, and 
for this value of $C_3$ the condition on $C_1$ and $C_2$ for the domain to be stable reads
\be T M_2 \max\Big(1,M_1\Big)\max\Big(\frac{K C_1^\gamma}{c}, C_2\Big)\leq \frac{1}{2 e}.
\label{condC1C2}
\ee
But the term in the left-hand side  of \eqref{condC1C2} 
tends to $0$ when $C_1$ and $C_2$ tend to $0$, so that for $C_1,C_2$ small enough the condition 
\eqref{condC1C2} is satisfied and $\mathcal{D}$ is stable by $\mathcal{F}$. 

\subsection{Continuity of the operator}
In this part we consider a sequence $\{(\tu_n,\tv_n)\} \subset \mathcal{D}$ and a couple $(\tu,\tv) \in \mathcal{D}$ such that
\begin{equation}
 \max\Big( ||\tu_n-\tu||_{\Cc^0(\Ot)},||\tv_n-\tv||_{\Cc^0(\Ot)}\Big)\underset{n\ra +\infty}{\ra}0.
\end{equation}
 Let us now define 
\begin{equation}
(u_n,v_n)=\mathcal{F}(\tu_n,\tv_n) \quad \text{ for }  n\geq 0,
 \quad \text{ and }\quad (u,v)=\mathcal{F}(\tu,\tv).
 \end{equation}
Our goal in this subsection is to show that 
\begin{equation}
 \max\Big( ||u_n-u||_{\Cc^0(\Ot)},||v_n-v||_{\Cc^0(\Ot)}\Big)\underset{n\ra +\infty}{\ra}0.
\end{equation}
We need the following 
\begin{lem}\label{lem:ConvFaible}
For almost all $(t,x)\in \Ot$, we have
\begin{equation}
(u_n(t,x),v_n(t,x)) \underset{n\ra +\infty}{\ra}(u(t,x),v(t,x)).
\end{equation}
\end{lem}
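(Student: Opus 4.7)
The plan is to reduce Lemma~\ref{lem:ConvFaible} to the convergence results proved in Section~2 for the linear transport equation, applied with variable coefficients $a_n(t,x):=\lambda(\tilde u_n(t,x),\tilde v_n(t,x))$ and $b_n(t,x):=\mu(\tilde u_n(t,x),\tilde v_n(t,x))$ and their limits $a(t,x):=\lambda(\tilde u(t,x),\tilde v(t,x))$, $b(t,x):=\mu(\tilde u(t,x),\tilde v(t,x))$. Since $\lambda,\mu\in\Cc^\infty(\R^2)$ and the sequence $\{(\tilde u_n,\tilde v_n)\}$ is uniformly bounded in $[-C_1,C_1]^2$, uniform convergence of $(\tilde u_n,\tilde v_n)$ on $\Ot$ yields $\|a_n-a\|_{\Cc^0(\Ot)}+\|b_n-b\|_{\Cc^0(\Ot)}\to 0$. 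Moreover, since each $\tilde u_n,\tilde v_n$ is $C_3$-Lipschitz, the chain rule together with \eqref{eq:BornVPGrad} gives a uniform bound $\|a_n\|_{L^\infty(0,T;\Lip([0,1]))}+\|b_n\|_{L^\infty(0,T;\Lip([0,1]))}\le 2M_2C_3$, so the hypotheses of Proposition~\ref{prop:regu} are satisfied (and likewise for the symmetric problem in $v$, after $x\mapsto 1-x$).

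Denote by $\phi_n,e_n$ the flow and exit time associated with $a_n$, and by $\phi,e$ those associated with $a$. By Proposition~\ref{prop:regu} and the compactness remark that follows it, $e_n\to e$ uniformly on $\Ot$, and by estimate \eqref{eq:Gron}, $\phi_n\to\phi$ uniformly on $[0,T]^2\times[0,1]$. The boundary trace $u_l$ and the initial datum $u_0$ are fixed (they depend only on $u_0(0),u_0,K,\gamma$), so they are the same for every $n$. Using the explicit representation \eqref{eq:FORMbis}, we have
\begin{equation*}
u_n(t,x)=\begin{cases}u_l(e_n(t,x))&\text{if }(t,x)\in J_n,\\ u_0(\phi_n(0,t,x))&\text{if }(t,x)\in I_n\cup P_n,\end{cases}
\end{equation*}
and similarly for $u$. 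Arguing exactly as in the proof of Proposition~\ref{prop:weak}: if $(t,x)\in I$, then $e(t,x)=0$ and by the dichotomy established there (with the same uniform bounds), $(t,x)\in I_n$ for $n$ large, whence $u_n(t,x)=u_0(\phi_n(0,t,x))\to u_0(\phi(0,t,x))=u(t,x)$ by continuity of $u_0$ and uniform convergence of the flows; if instead $(t,x)\in J$, then $e(t,x)>0$ and for $n$ large $(t,x)\in J_n$, whence $u_n(t,x)=u_l(e_n(t,x))\to u_l(e(t,x))=u(t,x)$ by continuity of $u_l$ and the uniform convergence $e_n\to e$.

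The set $P$ (and, for each $n$, $P_n$) is a Lipschitz curve, hence of Lebesgue measure zero, so this gives the pointwise convergence $u_n(t,x)\to u(t,x)$ for a.e.\ $(t,x)\in\Ot$. The argument for $v_n\to v$ is identical after the transformation $x\mapsto 1-x$ which reduces the negative speed $b_n$ to the positive-speed framework of Section~2. This proves the lemma. The only delicate point, already fully handled by Proposition~\ref{prop:regu}, is the behaviour near $P$: since this set has zero measure we do not need to control $u_n-u$ there, which is why pointwise a.e.\ convergence (rather than uniform convergence) is the right statement at this stage; the stronger $\Cc^0$-convergence asserted in the surrounding text will follow by combining this a.e.\ convergence with dominated convergence and the uniform $L^\infty$ bound from Lemma~\ref{lem1}.
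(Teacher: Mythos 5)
Your proof is correct and follows essentially the same route as the paper: uniform convergence of the coefficients $\lambda(\tilde u_n,\tilde v_n)$ with a uniform Lipschitz bound, Proposition~\ref{prop:regu} and \eqref{eq:Gron} for the convergence of $e_n$ and $\phi_n$, the representation formula \eqref{eq:FORMbis}, the case split $I$ versus $J$ with $P$ negligible, and the reflection $x\mapsto 1-x$ for $v$. One small caveat outside the lemma itself: the upgrade to uniform convergence is obtained in the paper from the compactness of $\mathcal{D}$ in $\Cc^0(\Ot)^2$ and uniqueness of the limit point, not from dominated convergence (which would only yield $L^1$ convergence).
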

\begin{proof}
Let us show that $u_n(t,x)\ra u(t,x)$, the convergence $v_n(t,x) \ra v(t,x) $ being similar.\par
The fact that $(\tu_n,\tv_n)$ converges uniformly toward $(\tu,\tv)$ on $\Ot$ implies that $\lambda(\tu_n,\tv_n)$ converges 
uniformly toward $\lambda(\tu,\tv)$ on $\Ot$. Furthermore, since $(\tu_n,\tv_n)\in \mathcal{D}$ for all $n$, we see that the 
functions $\lambda(\tu_n,\tv_n)$ are uniformly Lipschitz continuous for $n\ge 0$. This will allow us to use Proposition 
 \ref{prop:regu}.  
To this end, we consider the flow $\phi_n$ (resp. $\phi$) of $\lambda(\tu_n,\tv_n)$ (resp. $\lambda(\tu,\tv)$). In the 
same way, we define $e_n$ and $e$, $I_n$ and $I$, $J_n$ and $J$, $P_n$ and $P$. Using \eqref{eq:FORMbis} we have that
\begin{equation}
u_n(t,x)=
\begin{cases}
u_l(e_n(t,x)) & \text{ if } (t,x)\in J_n,\\ 
u_0(\phi_n(0,t,x)) & \text{ if } (t,x)\in I_n\cup P_n,
\end{cases}
\end{equation}
and also
\begin{equation}
u(t,x)=
\begin{cases}
u_l(e(t,x)) & \text{ if } (t,x)\in J,\\ 
u_0(\phi(0,t,x)) & \text{ if } (t,x)\in I\cup P.
\end{cases}
\end{equation}
We infer from Proposition \ref{prop:regu} that
\begin{equation}
 e_n(t,x) \underset{n\ra +\infty}{\ra} e(t,x),
 \quad \forall (t,x)\in \Ot .
\end{equation}
This shows in particular that if $(t,x)\in J$, then $e(t,x)>0$ and hence $e_n(t,x)>0$ for $n$ large enough, i.e. $(t,x)\in J_n$ for $n$ large enough. 
Therefore
$$u_n(t,x)\underset{n\ra +\infty}{\ra}u(t,x),\qquad \forall (t,x)\in J.$$
Now if $(t,x)\in I$, then $e(t,x)=0$ and $\phi(0,t,x)>0$. Since $\lambda \geq c>0$, this implies the existence of $\epsilon>0$ such that
\begin{equation}
\epsilon <\phi(s,t,x) ,\qquad  
\forall s\in [0,t]. 
\end{equation}
Combined with \eqref{eq:Gron},
this shows that for $n$ large enough $e_n(t,x)=0$ and $\phi_n(0,t,x)\ra \phi(0,t,x)$, so we conclude that
\begin{equation}
 u_n(t,x)=u_0(\phi_n(0,t,x))\underset{n\ra +\infty}{\ra }u_0(\phi(0,t,x))=u(t,x).
\end{equation}
Finally, $P$ is clearly negligible and $I\cup P \cup L= [0,T]\times [0,1].$ 
\end{proof}

To strengthen this convergence, we just need to recall that for every $n\geq 0$, we have $(u_n,v_n)\in \mathcal{D}$ 
which is compact in $\Cc^0(\Ot)$. According to Lemma \ref{lem:ConvFaible}, 
the only possible limit point is $(u,v)$ and therefore we get the convergence of the whole sequence in $\mathcal{D}$; that is, 
 \begin{equation}
 \max\Big( ||u_n-u||_{\Cc^0(\Ot)},||v_n-v||_{\Cc^0(\Ot)}\Big)\underset{n\ra +\infty}{\ra}0.
\end{equation}
This shows that the operator $\mathcal{F}$ is continuous on the domain $\mathcal{D}$, which is a convex compact set in $\Cc^0(\Ot)^2$. It follows then 
from Schauder fixed-point theorem that $\mathcal{F}$ has a fixed-point. This proves the existence of solutions on the time interval $[0,T]$. 

\subsection{Uniqueness of the solution}
Let $u_0,v_0\in \Lip([0,1])$ be as in \eqref{eq:BornInit}-\eqref{eq:BornInitGrad}. Assume given two pairs  $(u^1,v^1),(u^2,v^2)\in {\mathcal D} $ of solutions of 
\eqref{eq:closedSys}-\eqref{CC1}; that is, if $u_l$ and $v_r$ are defined as in \eqref{eq:fp2}, then $u^i$, $i=1,2$, is  a (weak) solution of 
\[
\left\{
\begin{array}{l}
\partial_t u^i+\lambda(u^i,v^i) \partial_x u^i=0, \\
u^i(t,0)=u_l(t),\quad u^i(0,x)= u_0(x),
\end{array}
\right.
\qquad (t,x)\in (0,T)\times (0,1),
\]
while $v^i$, $i=1,2$, is a (weak) solution of 
\[
\left\{
\begin{array}{l}
\partial_t v^i+\mu (u^i,v^i) \partial_x v^i=0, \\
v^i(t,1)=v_r(t),\quad v^i(0,x)= v_0(x),
\end{array}
\right.
\qquad (t,x)\in (0,T)\times (0,1).
\]

Let $\hat u=u^1-u^2$ and $\hat v=v^1-v^2$. Note that $\hat u,\hat v\in \text{Lip}([0,T]\times [0,1])=W^{1,\infty} ((0,T)\times (0,1))$ and that $\hat u,\hat v$ fulfill 
\begin{eqnarray}
&&\partial_t \hat u+\lambda ^1 \partial_x \hat u + \hat \lambda \partial _x  u^2=0,\qquad (t,x)\in (0,T)\times (0,1), \label{AA1}\\
&&\partial_t \hat v+\mu ^1 \partial_x \hat v + \hat \mu \partial _x  v^2=0,\qquad (t,x)\in (0,T)\times (0,1), \label{AA2}\\
&&\hat u(t,0)=\hat v(t,1)=0,\qquad \hat u(0,x)=\hat v(0,x)=0, \label{AA3}
\end{eqnarray}
where $ \lambda ^i = \lambda  ( u^i , v^i)$, $\mu ^i = \mu (u^i,v^i)$, and $\hat \lambda = \lambda ^1 - \lambda ^2$,
$\hat \mu =\mu ^1 - \mu ^2$.

Multiplying in \eqref{AA1}  by $ 2\hat u$, in \eqref{AA2} by $ 2 \hat v$, integrating over $(0,t)\times (0,1)$, and adding the two equations 
gives
\[
||\hat u(t)||_2^2 + ||\hat v(t)||_2^2 + 2\int _0^t \!\! \int _0^1(\lambda ^1 \hat u\partial _x \hat u + \mu ^1 \hat v\partial _x \hat v )\,  dxds
+2\int _0^t \!\! \int _0^1 (\hat \lambda \hat u\partial _x u^2 +\hat \mu  \hat v \partial _x v^2 )\, dxds =0.
\]

Using \eqref{AA3} and an integration by parts, we obtain
\begin{eqnarray*}
&&2\int _0^t \!\! \int _0^1(\lambda ^1 \hat u\partial _x \hat u + \mu ^1 \hat v\partial _x \hat v)\\
&&\qquad  = -\int _0^t \!\! \int_0^1 [ (\partial _x \lambda ^1) |\hat u|^2 + (\partial _x \mu ^1) |\hat v|^2 ] \, dxds 
+ \int_0^t [ \lambda ^1 |\hat u (s,1 ) |^2    - \mu ^1 |\hat v (s,0) |^2  ] ds \\
&&\qquad   \ge -\int_0^t\!\! \int_0^1 [(\partial _x \lambda ^1 ) |\hat u|^2 + (\partial _x \mu ^1) |\hat v|^2  ]\, dxds  
\end{eqnarray*}
where we used \eqref{A2}. On the other hand, since $\lambda$ and $\mu$ are $M_2$-Lipschitz continuous on $[-C_1,C_1]^2$, we infer that 
$\lambda ^i$ and $\mu ^i$ are $2 M_2C_3$-Lipschitz continuous on $[0,T]\times [0,1]$. In particular, 
\[
|| \partial _x \lambda ^1 ||_{\infty} \le 2 M_2C_3,\quad ||\partial _x \mu ^1 ||_\infty  \le 2 M_2 C_3
\]   
and
\begin{eqnarray*}
|\hat \lambda | &\le& M_2  (|\hat u|+ |\hat v|), \\
|\hat \mu | &\le&  M_2 (|\hat u|+ |\hat v|). 
\end{eqnarray*}
This yields
\[
|2\int _0^t \!\! \int _0^1 (\hat \lambda \hat u\partial _x u^2 +\hat \mu  \hat v \partial _x v^2 )\, dxds| \le
2M_2C_3\int_0^t\!\!\int_0^1 (|\hat u| +|\hat v |)^2dxds.
\]
We conclude that for all $t\in (0,T)$ 
\[
||\hat u(t)||_2^2 + ||\hat v(t)||_2^2  \le 6 M_2 C_3   \int_0^t (||\hat u||_2^2 +||\hat v||_2^2 ) ds.  
\]
This yields $\hat u=\hat v\equiv 0$, by Gronwall's lemma. 

\subsection{Finite-time extinction of the maximal solutions}
In this section, $(u,v)$ denotes the only solution of \eqref{eq:closedSys}-\eqref{CC1} in the class $\mathcal{D}$. 
\begin{lem}
\label{lem10}
 At time $t=T$ we have
\be
u(T,x)=v(T,x)=0,\quad \forall x\in [0,1].
\label{CBA}
\ee
\end{lem}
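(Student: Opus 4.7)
The plan is to use the explicit representation formula \eqref{eq:FORMbis} for the Lipschitz solutions of each transport equation, combined with the fact that at time $T$ the boundary data $u_l$ and $v_r$ have already extinguished, and that every backward characteristic has had enough time to reach the inflow boundary.

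\smallskip

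First I note that the explicit formulas for $u_l$ and $v_r$ given right after \eqref{eq:fp2}, together with $\max(|u_0(0)|,|v_0(1)|)\le C_1$, imply
\[
u_l(t)=v_r(t)=0\qquad \text{for all } t\ge T-\frac{1}{c}=\frac{C_1^{1-\gamma}}{(1-\gamma)K}.
\]
So it suffices to show that, for every $x\in[0,1]$, the backward characteristic issuing from $(T,x)$ for the first equation of \eqref{eq:closedSys} hits the lateral boundary $x=0$ at a time $e(T,x)\ge T-1/c$, and symmetrically for $v$.

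\smallskip

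Fix $x\in[0,1]$ and let $\phi$ denote the flow of $a(t,x):=\lambda(u(t,x),v(t,x))$, which satisfies $a\ge c>0$ by \eqref{A2}. By definition, $\phi(T,T,x)=x$ and $\partial_s\phi(s,T,x)=a(s,\phi(s,T,x))\ge c$, so $s\mapsto \phi(s,T,x)$ is strictly increasing on its interval of existence. Since $cT=1+cC_1^{1-\gamma}/((1-\gamma)K)>1\ge x$, the backward trajectory must leave $[0,1]$ through $x=0$ before reaching $s=0$; that is, $(T,x)\in J$ and $\phi(e(T,x),T,x)=0$ with $e(T,x)>0$. Integrating $\partial_s\phi=a\ge c$ from $e(T,x)$ to $T$ gives
\[
x=\int_{e(T,x)}^T a(s,\phi(s,T,x))\,ds \ge c\,(T-e(T,x)),
\]
so $e(T,x)\ge T-x/c\ge T-1/c$. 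Therefore, using \eqref{eq:FORMbis} and the vanishing of $u_l$,
\[
u(T,x)=u_l(e(T,x))=0.
\]

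\smallskip

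The proof that $v(T,x)=0$ is entirely analogous, working with the flow of $\mu(u,v)\le -c<0$: the transformation $x\mapsto 1-x$ brings $v$ into the previous setting, with inflow at the new $x=0$ and boundary datum $v_r$, which also vanishes for $t\ge T-1/c$. The only step requiring a little care is the bookkeeping showing that the backward characteristic from $(T,x)$ indeed reaches the inflow boundary (here $x=1$) rather than $t=0$; this is guaranteed, exactly as above, by the lower bound $-\mu\ge c$ together with $cT>1$. No real obstacle is expected—once the representation formula and the lower bound on the characteristic speed are in hand, \eqref{CBA} is almost immediate.
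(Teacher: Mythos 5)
Your proof is correct and follows essentially the same route as the paper: the boundary data $u_l,v_r$ vanish for $t\ge T-1/c$, the lower bound $|a|\ge c$ on the characteristic speeds forces the entrance times $e^\lambda(T,x)$ and $e^\mu(T,x)$ to be at least $T-1/c$, and the representation formula \eqref{eq:FORMbis} then gives \eqref{CBA}. You merely spell out the integration argument $x=\int_{e(T,x)}^T a\,ds\ge c\,(T-e(T,x))$ that the paper leaves implicit.
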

\noindent
{\em Proof of Lemma \ref{lem10}:}
We infer from  \eqref{C1} that
\begin{equation}\label{eq:NullCond}
u(t,0)=v(t,1)=0,\quad \forall t\geq T-\frac{1}{c}.
\end{equation}
Thanks to \eqref{A1}-\eqref{A2}, we have that
$$\lambda(u(t,x),v(t,x)) \geq c >0 >-c> \mu(u(t,x),v(t,x)),\qquad  \forall (t,x)\in \Ot .$$ 
Let $\phi^\lambda$ (resp. $\phi^\mu$) denote the flow of $\lambda(u,v)$ (resp. $\mu(u,v)$), and
let $e^\lambda$ (resp. $e^\mu$) denote the corresponding entrance times. 
(Note that  $e^\mu >0$ implies $\phi^\mu(e^\mu(t,x),t,x)=1$.) Then the following holds:
$$e^\mu(T,x)\geq T-\frac{1}{c}\quad \text{ and } \quad e^\lambda(T,x)\geq T-\frac{1}{c},
\quad \forall x\in [0,1].$$
Combining this with \eqref{eq:NullCond} and \eqref{eq:FORMbis}, we obtain \eqref{CBA}.
\qed

Finally, it is sufficient to extend $u$ and $v$ by $0$ for $t\geq T$ to get a global in time solution.
The stability property \eqref{stab1} follows at once from \eqref{eq:direct}-\eqref{eq:grad}, as the r.h.s. in \eqref{eq:direct} and \eqref{eq:grad}
tend to 0 as $(C_1,C_2)\to (0,0)$. 
The proof of Theorem \ref{main1} is complete. \qed
 
\section{Finite time stabilization with a control from one side}
In this section, we consider a system of the form 
\ba
&&\partial _t u + \lambda (u,v) \partial _x u=0, \qquad (t,x)\in (0,+\infty ) \times (0,1), \label{K1} \\
&&\partial _t v + \mu (u,v) \partial _x v = 0 , \qquad (t,x) \in (0,+\infty ) \times (0,1), \label{K2} \\
&& u(t,0)=h(v(t,0),t),\quad u(0,x)=u_0(x), \label{K3} \\
&& v(t,1) = v_r(t), \quad v(0,x) =v_0(x), \label{K4}  
\ea
where $v_r$ still solves the ODE
\be
\frac{d}{dt} v_r(t) = -K \text{sgn} (v_r(t)) |v_r(t)|^\gamma , \quad v_r(0)= v_0(1).
\label{K5}   
\ee
In \eqref{K3}, $h$ denotes some function in $\Cc ^1([-\overline{C_1},\overline{C_1}]\times \R ^+ )\cap W^{1,\infty}( (-\overline{C_1},
\overline{C_1})\times (0,+\infty ))$  for some number $\overline{C_1}>0$ such that, for some time $T_h>0$,
\be
\label{K8}
h(0,t)=0 \qquad \forall t\ge T_h.
\ee
We introduce the numbers
\begin{eqnarray*}
C_1 &\in& (0,\overline{C_1} ] ,\\ 
T&:=& \frac{1}{c}  + \max ( T_h, \frac{1}{c} + \frac{C_1^{1-\gamma}}{(1-\gamma )K} ), \\ 
C_1' &:=&  \max (C_1, ||h||_{L^\infty ((-C_1,C_1)\times (0,+\infty ))}  ) , \\
D_1 &:=& ||\partial _v h||_{L^\infty ((-C_1,C_1)\times (0,+\infty ))}, \\
D_2 &:=& ||\partial _t h||_{L^\infty ((-C_1,C_1)\times (0,+\infty ))},\\
M_1&:=&\max\Big(||\lambda||_{\Cc^0([-C_1',C_1']\times [-C_1,C_1] )},||\mu||_{\Cc^0([-C_1',C_1']\times [-C_1, C_1] )}\Big), \\
M_2&:=&\max\Big(
||\partial_u \mu||_{\Cc^0([-C_1',C_1']\times [-C_1, C_1])},
||\partial_v \mu||_{\Cc^0([-C_1',C_1']\times [-C_1, C_1])}, \\
&&\quad \qquad ||\partial_u \lambda||_{\Cc^0([-C_1',C_1']\times [-C_1, C_1] )},
||\partial_v \lambda||_{\Cc^0([-C_1',C_1']\times [-C_1, C_1] )}\Big) ,\\
C_3 &:=& \max (\frac{1}{2TM_2}   ,C_2), \\
C_3' &:=& \max (\frac{KC_1^\gamma }{c},C_2) \max (1,M_1) \exp (2TM_2C_3). 
\end{eqnarray*}

Note that, if $||v||_{\Cc ^0 ([0,T]\times [0,1] )} \le C_1$, then for all $t\in (0,T)$
\[
|u(t,0) | \le C_1' \quad \text{ and } \quad  |\partial _t u(t,0) |  \le D_1 |\partial _t v(t,0) | + D_2. 
\] 

We shall consider the following conditions
\ba
&&C_3' \le C_3, \label{K12} \\
&&C_3'' :=   \max  (  \frac{1}{c} (D_1 C_3' + D_2 ),C_2) \max (1,M_1)\exp (2TM_2C_3)  \le C_3.
\label{K13}
\ea
Note that \eqref{K12} and \eqref{K13} are satisfied if $C_1$, $C_2$, and $D_2$ are small enough.  

We introduce the set 
\begin{multline*}
{\mathcal D } := \left\{ (u,v)\in \Lip ([0,T]\times [0,1])^2; \ ||u||_{\Cc ^0 ([0,T]\times [0,1])} \le C_1',\ ||v||_{\Cc ^0 ([0,T]\times [0,1])}  \le C_1, \right.\\ 
\left. u \text{ is } C_3\text{-Lipschitz}, \  v \text{ is } C_3'\text{-Lipschitz} \right\} .
\end{multline*}

We pick a pair $(u_0,v_0)\in \Lip ([0,1])^2$ fulfilling \eqref{eq:BornInit}-\eqref{eq:BornInitGrad}
and the following compatibility condition 
\be
\label{K9}
u_0(0)=h(v_0(0),0).
\ee

Let us do some comments about the boundary condition \eqref{K3}.  
For a system of conservation laws on the interval $(0,1)$, a very general boundary condition at $x=0$ takes the form 
$f(u(t,0),v(t,0))=0$. If $\partial _u f(u_0,v_0)\ne 0$, then around $(u_0,v_0)$ an application of the Implicit
Function Theorem gives a relation of the form
\[
u(t,0)=h(v(t,0))
\] 
with $h$ a smooth function of $v$ in a neighborhood of $v_0$. Assume now that the interval represents an edge in a network, and that
the left endpoint is a multiple node (i.e. it belongs to at least two edges). The contributions of the other edges at this multiple node
can be  taken into account in $h$ through its dependence in $t$  in \eqref{K3}.

We are in a position to state the main result of this section.
 \begin{thm}
\label{main2} Assume that  $C_1,C_2$ and $D_2$ are such that the conditions \eqref{K12} and \eqref{K13} are satisfied. Then
for any pair $(u_0,v_0)\in \Lip ([0,1])^2$ fulfilling \eqref{eq:BornInit}, \eqref{eq:BornInitGrad} and \eqref{K9}, 
there exists a unique solution $(u,v)$ of \eqref{K1}-\eqref{K5} in the class $\mathcal{D}$. Furthermore, the solution is global in time
with $u(t,.)=v(t,.)=0$ for $t\ge T$. 
Finally, if $h=h(v)$, then the equilibrium state $(0,0)$ is stable in $\text{Lip}([0,1])^2$ for \eqref{K1}-\eqref{K5}; that is
\be
\label{stab2}
||(u,v)||_{L^\infty (\R ^+; \text{\rm Lip}([0,1])^2)} \to 0 \ \ \text{ as } \ \ ||(u_0,v_0)||_{\text{\rm Lip}([0,1])^2}\to 0. 
\ee
\end{thm}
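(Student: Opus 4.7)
The plan is to follow the same Schauder fixed-point architecture used for Theorem \ref{main1}, with the domain $\mathcal{D}$ now allowing two distinct Lipschitz moduli: $C_3$ for $u$ and the (generally larger) $C_3'$ for $v$. The two moduli are unavoidable because the boundary trace $u_l(t):=h(\tilde v(t,0),t)$ of $u$ inherits the full $C_3'$-Lipschitz regularity of $\tilde v$ (amplified by the factor $D_1=||\partial_v h||_\infty$) plus an extra $D_2$ coming from $\partial_t h$, whereas $v$ only sees the smoother feedback $v_r$ at $x=1$.

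Concretely, I restrict further to those $(\tilde u,\tilde v)\in\mathcal{D}$ with $\tilde u(0,\cdot)=u_0$ and $\tilde v(0,\cdot)=v_0$ (still a convex compact subset of $\Cc^0(\Ot)^2$ by Ascoli-Arzela), so that the corner compatibility $u_l(0)=h(v_0(0),0)=u_0(0)$ required by Proposition \ref{prop:weak} follows from \eqref{K9}, and I set $\mathcal{F}(\tilde u,\tilde v):=(u,v)$ where $u,v$ are the weak Lipschitz solutions furnished by Proposition \ref{prop:weak} of the two transport equations with speeds $\lambda(\tilde u,\tilde v),\mu(\tilde u,\tilde v)$, boundary data $u_l,v_r$, and initial data $u_0,v_0$. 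The $L^\infty$ bounds $||v||_\infty\le C_1$ and $||u||_\infty\le C_1'$ follow at once from \eqref{eq:ES}. For the Lipschitz bounds, the $x$-Lipschitz constant of the composed speeds $\lambda(\tilde u,\tilde v),\mu(\tilde u,\tilde v)$ is at most $M_2(C_3+C_3')\le 2M_2C_3$ by \eqref{K12}, while the Lipschitz constant of $u_l$ is at most $D_1C_3'+D_2$; plugging these into \eqref{eq:BornLip} yields a Lipschitz modulus $\le C_3'$ for $v$ and $\le C_3''$ for $u$, and condition \eqref{K13} forces $C_3''\le C_3$. Hence $\mathcal{F}(\mathcal{D})\subset\mathcal{D}$. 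Continuity of $\mathcal{F}$ in the uniform topology is identical to the argument for Theorem \ref{main1}: Proposition \ref{prop:regu} combined with \eqref{eq:Gron} gives pointwise convergence of the flows, whence of $(u_n,v_n)$ through \eqref{eq:FORMbis}; the $\Cc^1$ regularity of $h$ transfers uniform convergence of $\tilde v_n(\cdot,0)$ to $u_{l,n}$, and Ascoli upgrades pointwise to uniform convergence on the compact set $\mathcal{D}$. Schauder's theorem then delivers the fixed point.

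The hard step is uniqueness. Setting $\hat u:=u^1-u^2$ and $\hat v:=v^1-v^2$, the unweighted $L^2$ energy estimate of Theorem \ref{main1} almost carries over, except that $\hat u(t,0)=h(v^1(t,0),t)-h(v^2(t,0),t)$ no longer vanishes but only satisfies $|\hat u(t,0)|\le D_1|\hat v(t,0)|$. Integrating by parts in the $u$-equation then produces the bad boundary term $\lambda^1(t,0)|\hat u(t,0)|^2\le M_1D_1^2|\hat v(t,0)|^2$. I would therefore work with the weighted energy
\begin{equation*}
E(t):=||\hat u(t)||_2^2+A\,||\hat v(t)||_2^2,
\end{equation*}
where $A\ge M_1D_1^2/c$ is chosen large enough that the good contribution $A\mu^1(t,0)|\hat v(t,0)|^2\le -Ac|\hat v(t,0)|^2$ coming from the $v$-equation absorbs the bad one. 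The interior terms are then controlled exactly as in Theorem \ref{main1} using the uniform Lipschitz bounds on $\lambda^i,\mu^i,u^i,v^i$ built into $\mathcal{D}$, and Gronwall's inequality yields $\hat u\equiv\hat v\equiv 0$.

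Finite-time extinction is then obtained by chasing zeros along characteristics: $v_r(t)=0$ for $t\ge C_1^{1-\gamma}/((1-\gamma)K)$, so the $\mu$-flow (with speed $\le -c$) forces $v\equiv 0$ on $[0,1]$ for $t\ge 1/c+C_1^{1-\gamma}/((1-\gamma)K)$; for $t\ge\max(T_h,\,1/c+C_1^{1-\gamma}/((1-\gamma)K))$ we then have $u(t,0)=h(0,t)=0$ by \eqref{K8}, and this zero is carried to all of $[0,1]$ in a further time $1/c$ by the $\lambda$-flow, so $u(T,\cdot)=v(T,\cdot)=0$. Extending by $0$ beyond $T$ provides the global solution. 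In the autonomous case $h=h(v)$ one has $D_2=0$, and the constants $C_1,C_1',C_2,C_3',C_3''$ all tend to $0$ as $(u_0,v_0)\to 0$ in $\text{Lip}$, so the $L^\infty$ and Lipschitz bounds built into $\mathcal{D}$ immediately yield \eqref{stab2}.
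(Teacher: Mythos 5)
Your proposal is correct and follows essentially the same route as the paper: the same fixed-point operator on the domain with the two Lipschitz moduli $C_3$ and $C_3'$, the same weighted energy $\|\hat u\|_2^2+A\|\hat v\|_2^2$ with $A\ge M_1D_1^2/c$ chosen so that the bad boundary term $\lambda^1(t,0)|\hat u(t,0)|^2\le M_1D_1^2|\hat v(t,0)|^2$ is absorbed by the good term $\mu^1(t,0)|\hat v(t,0)|^2\le -c\,|\hat v(t,0)|^2$, and the same two-stage extinction argument ($v$ first, then $u$ via \eqref{K8}). Your explicit restriction of the domain to pairs with the prescribed initial traces, which secures the corner compatibility $u_l(0)=h(v_0(0),0)=u_0(0)$ needed to invoke Proposition \ref{prop:weak}, is a small refinement that the paper leaves implicit but does not alter the argument.
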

\begin{proof}
It is very similar to those of Theorem \ref{main1}. If $(\tilde u,\tilde v)\in {\mathcal D}$ is given, we define $(u,v)={\mathcal F} (\tilde u ,\tilde v)$ as follows:
$u$ is the weak solution of the system
\[
\begin{cases}
 \partial_t u+\lambda(\tu,\tv)\partial_x u=0,\\
u(t,0)=h(\tv (t,0),t) ,\qquad u(0,x)=u_0(x),
\end{cases}
\forall (t,x)\in [0,T]\times [0,1],
\]
and $v$ is the weak solution of the system
\[
\begin{cases}
 \partial_t v+\mu(\tu,\tv)\partial_x v=0,\\
v(t,1)=v_r(t),\qquad v(0,x)=v_0(x),
\end{cases}
\forall (t,x)\in [0,T]\times [0,1].
\]
Then, using Proposition \ref{prop:weak} and \eqref{K12}-\eqref{K13}, one readily sees that 
\[
||u||_{\Cc ^0 ([0,T]\times [0,1])}\le C_1',\ ||v||_{\Cc  ^0([0,T]\times [0,1])} \le C_1,\  
\]
\ba
\label{ABC1}
&&u \text{ is }  C_3'' \text{-Lipschitz},  \text{ hence } u \text{ is }  C_3\text{-Lipschitz}, \\
&&v \text{ is }  C_3' \text{-Lipschitz}, \label{ABC2}
\ea
so that $\mathcal F$ maps $\mathcal D$ into itself. Let us prove that $\mathcal F$ is continuous, $\mathcal D$ being equipped with the topology of the 
uniform convergence.
Consider a sequence $\{ (\tu_n,\tv_n)\} \subset \mathcal{D}$ and a pair $(\tu,\tv) \in \mathcal{D}$ such that
\begin{equation}
 \max\Big( ||\tu_n-\tu||_{\Cc^0(\Ot)},||\tv_n-\tv||_{\Cc^0(\Ot)}\Big)\underset{n\ra +\infty}{\ra}0.
\end{equation}
 Let
\begin{equation}
(u_n,v_n)=\mathcal{F}(\tu_n,\tv_n) \quad \text{ for }  n\geq 0,
 \quad \text{ and }\quad (u,v)=\mathcal{F}(\tu,\tv).
 \end{equation}
We aim to prove that $u_n\to u$ and $v_n\to v$ uniformly on $[0,T]\times [0,1]$ as $n\to \infty$. 
We focus on $u_n$, the argument for $v_n$ being the same as those given in Lemma \ref{lem:ConvFaible}. We consider the same
$\phi _n, \phi, e_n, e, I_n, I, J_n, J,P_n$  and $P$, as in the proof of Lemma \ref{lem:ConvFaible}. Then 
\begin{equation*}
u_n(t,x)=
\begin{cases}
h (\tilde v _n (e_n(t,x),0), e_n(t,x)) & \text{ if } (t,x)\in J_n,\\ 
u_0(\phi_n(0,t,x)) & \text{ if } (t,x)\in I_n\cup P_n
\end{cases}
\end{equation*}
and
\begin{equation*}
u(t,x)=
\begin{cases}
h(\tilde v( e(t,x),0),e(t,x)) & \text{ if } (t,x)\in J,\\ 
u_0(\phi (0,t,x)) & \text{ if } (t,x)\in I\cup P.
\end{cases}
\end{equation*}
Assume first that $(t,x)\in J$. Then $e(t,x)>0$ and $e_n(t,x)>0$ for $n$ large enough, by
Proposition \ref{prop:regu}.
Since $\tilde v_n\to \tilde v$ uniformly on $[0,T]\times [0,1]$ and $e_n(t,x)\to e(t,x)$, we infer that 
\[
u_n(t,x) = h (\tilde v_n (e_n(t,x),0),e_n(t,x)) \to h (\tilde v (e(t,x),0),e(t,x)) = u(t,x).
\]
If now $(t,x)\in I$, one can repeat the argument in Lemma \ref{lem:ConvFaible}
to conclude that 
\[
u_n(t,x)=u_0(\phi _n (0,t,x))\to u_0(\phi (0,t,x)) = u(t,x). 
\]
Thus, $u_n(t,x)\to u(t,x)$ for $(t,x)\in I\cup J$, hence for a.e. $(t,x)\in [0,T]\times [0,1]$. We have also that 
$v_n(t,x)\to v(t,x)$ for a.e. $(t,x)\in [0,T]\times [0,1]$. We infer from the compactness of $\mathcal D$ in 
$\Cc ^0([0,T]\times [0,1])^2$ that $(u_n,v_n)\to (u,v)$ in  $\Cc ^0([0,T]\times [0,1])^2$. We conclude with Schauder 
fixed-point theorem  that $\mathcal F$ has a fixed-point  $(u,v)\mathcal \in \mathcal D$,  which is  a solution of \eqref{K1}-\eqref{K5} on 
$[0,T]\times [0,1]$. 

Let us now establish the uniqueness of the solution  of \eqref{K1}-\eqref{K5} in the class $\mathcal D$.  
Assume given two pairs  $(u^1,v^1),(u^2,v^2)\in {\mathcal D} $ of solutions of 
\eqref{K1}-\eqref{K5}; that is, with $v_r$ defined as in \eqref{K5}, $v^i$, $i=1,2$, is a (weak) solution of 
\[
\left\{
\begin{array}{l}
\partial_t v^i+\mu (u^i,v^i) \partial_x v^i=0, \\
v^i(t,1)=v_r(t),\quad v^i(0,x)= v_0(x),
\end{array}
\right.
\qquad (t,x)\in (0,T)\times (0,1),
\]
while $u^i$, $i=1,2$, is a (weak) solution of 
\[
\left\{
\begin{array}{l}
\partial_t u^i+\lambda(u^i,v^i) \partial_x u^i=0, \\
u^i(t,0)=h(v^i(t,0),t),\quad u^i(0,x)= u_0(x),
\end{array}
\right.
\qquad (t,x)\in (0,T)\times (0,1).
\]

Let $\hat u=u^1-u^2$ and $\hat v=v^1-v^2$. Note that $\hat u,\hat v\in W^{1,\infty} ((0,T)\times (0,1))$ and that $\hat u,\hat v$ satisfy 
\begin{eqnarray}
&&\partial_t \hat u+\lambda ^1 \partial_x \hat u + \hat \lambda \partial _x  u^2=0, \label{K21}\\
&&\partial_t \hat v+\mu ^1 \partial_x \hat v + \hat \mu \partial _x  v^2=0, \label{K22}\\
&&\hat u(t,0)=h(v^1(t,0),t) - h(v^2(t,0),t), \label{K23}\\ 
&&\hat v(t,1)=0,\label{K24}\\
&&\hat u(0,x)=\hat v(0,x)=0 \label{K25}
\end{eqnarray}
where $ \lambda ^i = \lambda  ( u^i , v^i)$, $\mu ^i = \mu (u^i,v^i)$, and $\hat \lambda = \lambda ^1 - \lambda ^2$,
$\hat \mu = \mu ^1 - \mu ^2$.

Multiplying in \eqref{K21}  by $ 2\hat u$ and  integrating over $(0,t)\times (0,1)$ gives
\ba
||\hat u(t)||^2 &=& - 2 \int _0^t \!\! \int _0^1[\lambda ^1 \hat u\partial _x \hat u + \hat \lambda \hat u\partial _x u^2] dxds \nonumber \\
 &=& \int _0^t \!\! \int _0^1[(\partial _x \lambda ^1 ) |\hat u|^2 - 2 \hat \lambda \hat u\partial _x u^2] dxds 
-\int_0^t \lambda ^1 |\hat u |^2 \big\vert _0^1ds \nonumber \\
&\leq & 2M_2 C_3 \int _0^t \!\! \int _0^1 [ |\hat u|^2 +   |\hat u | (  |\hat u| + |\hat v| ) ] dxds \nonumber \\
&&\quad + ||\lambda ||_{\Cc ^0 ( [-C_1',C_1'] \times [-C_1,C_1]) } D_1^2 \int_0^t  |\hat v (s,0) |^2 ds \label{K31}
\ea
where we used \eqref{A2}. 

Multiplying in \eqref{K22}  by $ 2\hat v$ and  integrating over $(0,t)\times (0,1)$ gives
\ba
||\hat v(t)||^2 &=& - 2 \int _0^t \!\! \int _0^1[\mu ^1 \hat v\partial _x \hat v + \hat \mu \hat v\partial _x v^2] dxds \nonumber \\
 &=& \int _0^t \!\! \int _0^1[(\partial _x \mu ^1 ) |\hat v|^2 - 2 \hat \mu \hat v\partial _x v^2] dxds 
+\int_0^t \mu ^1 |\hat v  (s,0)|^2 ds \nonumber \\
&\le& 2M_2C_3 \int_0^t\! \! \int_0^1 [ |\hat v|^2 +  |\hat v | ( |\hat u| + |\hat v|)]  dxds -c \int_0^t |\hat v (s,0)|^2 ds \label{K32}  
\ea
where we used \eqref{A2} again.
Let us introduce the energy 
\[
E(t) = ||\hat u(t)||^2 + ||\lambda ||_{ \Cc ^0 ( [-C_1',C_1']\times [-C_1,C_1])} \frac{D_1^2}{c} ||\hat v(t)||^2.
\]
Combining \eqref{K31} with \eqref{K32} yields
\[
E(t) \le C\int_0^t E(s) ds, 
\]
for some $C$ depending only on $\mathcal{D}$, so that $E\equiv 0$, by Gronwall's lemma. This proves the uniqueness. For the extinction time, we notice that from the proof of Theorem \ref{main1}
\[
v(t,x)=0, \qquad \text{ for }\   \frac{1}{c} + \frac{C_1^{1-\gamma} }{(1-\gamma )K }  \le t\le T, \ 0\le  x\le 1.
\]
Combined with \eqref{K8},  this yields 
\[
u(t,0)= h(v(t,0),t)=0, \qquad \text{ for }  \  \max \left( T_h,  \frac{1}{c} + \frac{C_1^{1-\gamma}}{(1-\gamma )K}\right) \le t\le T .
\]
Using \eqref{A2}, we conclude that 
\[
u(T,x)=0\qquad   \forall x\in [0,1].
\]
Assume now that $h=h(v)$, i.e. $D_2=0$.
The stability property \eqref{stab2} follows at once from \eqref{K8} and  \eqref{K12}-\eqref{K13}, as $C_1'\le \max (1,D_1)C_1$ and 
$(C_3',C_3'')\to (0,0)$ as $(C_1,C_2)\to (0,0)$. 
The proof of Theorem  \ref{main2} is complete.
\end{proof}
\section{Application to the regulation of water flow in channels}

In this section, we investigate the regulation of water flow in a network of open horizontal channels. We assume that the channels have a rectangular cross section
and that the friction on the walls can be neglected.  In this context, the flow of the fluid can be described in a satisfactory way by the shallow water
equations (also called Saint-Venant equations) (see \cite{HPCAB}). The control in feedback form is applied at the vertices of the network, which is assumed to be a tree.

We introduce some notations needed in what follows (we follow closely \cite{DZcras}). Let $\mathcal T$ be a tree, whose vertices (or nodes)
are numbered by the index $n\in \cN  =\{ 1,...,N\}$, and whose edges are numbered by the index 
$i\in \cI  =\{ 1,...,I \}$ with $I=N-1$. We choose a simple vertex, called the {\em root} 
of $\mathcal T$ and denoted by $\mathcal R$, and which corresponds to the index $n=N$. We choose an orientation of the edges in the tree
such that $\mathcal R$ is the ``last'' encountered vertex. It is similar to those of a fluvial network in which each edge stands for a river, and $\mathcal R$ 
indicates the place where the last river enters into the ocean. 

We denote by $l_i$ the length of the edge with index $i$. Once the orientation is chosen, each point of the $i$-th edge is identified with
a real number $x\in [0,l_i]$. The points  $x=0$ and $x=l_i$ are termed  the {\em initial point} and the {\em final point} of the $i$-edge, respectively.

Renumbering the edges if needed, we may assume that the edge with index $i$ has as initial point the vertex with the (same) index $n=i$ for all $i\in \cI$.  

We denote by $\cI _n\subset \cI$, $n=1,...,N$, the set of indices of those edges having the vertex of index $n$ as one of their ends. Let 
 \[
 \varepsilon_{i,n} = \left\{ 
 \begin{array}{ll}
 0\quad  &\text{\rm if the vertex with index $n$ is the initial point of the edge with index $i$};\\
 1\quad  &\text{\rm if the vertex with index $n$ is the final point of the edge with index $i$}.
 \end{array}
 \right.
 \]
 Note that $\varepsilon _{i,i}=0$ for all $i\in\cI $, and that $\varepsilon_{N-1,N}=1$. A node with index $n$ is said to be {\em simple} (resp. {\em multiple}) if
 $\# (\cI _n) = 1$ (resp. $\# (\cI _n)\ge 2$).   The sets of indices of simple and multiple nodes are denoted by $\cN _S$ and $\cN _M$, respectively. 
The {\em depth} of the tree is the greater number of edges in a path from one simple node to $\mathcal R$. 
\begin{figure}[http]
\begin{center}
\includegraphics[scale=0.5]{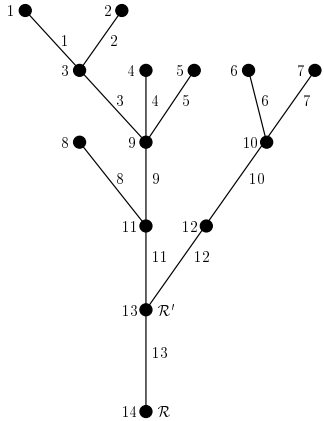}
\end{center}
\caption{ A tree with $14$ nodes, a depth equal to $5$, with simple nodes $\mathcal{N}_S=\{1,2,4,5,6,7,8,14\}$ and multiple nodes $\mathcal{N}_M=\{3,9,10,11,12,13\}$.}
\label{fig2}
\end{figure}
 
Pick any channel represented by (say) the $i$-th edge of the tree,  which is identified with the segment $[0,l_i]$. 
Then the shallow water equations read
\begin{eqnarray}
\partial _t H_i+ \partial _x (H_iV_i) &=& 0, \quad t > 0,\  0<x<l_i, \label{sw1}\\
\partial _t V_i + \partial _x( \frac{V_i^2}{2} + gH_i) &=& 0, \quad t > 0, \ 0<x<l_i, \label{sw2}
\end{eqnarray}
 where $H_i(t,x)$ (resp. $V_i(t,x)$) is the water depth (resp. the water velocity) along the $i$-th channel, and $g$ is the gravitation constant. 
 The equations \eqref{sw1}-\eqref{sw2} have to be supplemented with some initial conditions
 \be
 \label{sw3}
 H_i(0,x)=H_{i,0}(x),\quad V_i(0,x)=V_{i,0}(x), \qquad 0<x<l_i
 \ee
 and with two boundary conditions. 
 In general, there are at the two ends of the channel (i.e. at $x=0$ and at $x=l_i$) some hydraulic devices to assign the 
 values of the flow rate. Recall that the flow rate is defined along the channel as 
 \[
 Q_i(t,x):=H_i(t,x)V_i(t,x). 
 \]    
 At any multiple node $n\in \cN _M$, the equation of conservation of the flow
 \be
 \sum_{i\in \cI _n} (-1)^{\varepsilon _{i,n}} Q_i(t,\varepsilon _{i,n} l_i) =0     \label{F1}
 \ee
 has to be taken into consideration. It yields a boundary condition (coming from the physics) in which no control applies.
 Let $i_0\in \cI _n$ be the only index such that $\varepsilon _{i_0,n}=0$, namely $i_0=n$. Then \eqref{F1} can be written
 \be
 Q_{i_0} (t,0)= \sum_{i\in \cI _n, i\ne i_0} Q_i (t,l_i). \label{F2}
 \ee 
 Thus, the flow rate may be controlled at the final points of the edges of indices $i\ne i_0$, while it is prescribed by \eqref{F2}
 at the initial point of the edge of index $i_0$. 
 
 We aim to stabilize the system around some equilibrium state, represented by a sequence $\{ (H_i^*,V_i^*) \} _{1\le i\le I}$ 
 of pairs of positive numbers. Let $Q_i^* = H_i^*V_i^*$. For \eqref{F1} to be valid as $t\to \infty$, we impose that 
 \be
 \sum_{i\in \cI _n} (-1)^{\varepsilon  _{i,n}} Q_i^* =0, \qquad \forall n\in \cN _M.
 \label{F3}
 \ee
 
 Introduce  the characteristic velocities 
 \ba
 \mu _i &=& V_i - \sqrt{gH_i}, \label{F4}\\
 \lambda _i &=& V_i + \sqrt{gH_i} \label{F5} 
 \ea
 and the Riemann invariants (see \cite{evans,HPCAB})
 \ba
 u_i &=& V_i + 2\sqrt{gH_i} -(V_i^* + 2\sqrt{gH_i^*}), \label{F6} \\
 v_i &=& V_i  - 2\sqrt{gH_i} - (V_i^* - 2\sqrt{gH_i^*}).  \label{F7}
 \ea
 We shall assume thereafter that the flow is {\em subcritical} or {\em fluvial}; that is, the characteristic
 velocities are of opposite sign
 \[
 \mu _i < 0 < \lambda _i. 
 \]
 Clearly, this holds if 
 \be
 0 < V_i^* < \sqrt{gH_i^*} \label{F8}
 \ee
 and $\max (|H_i-H_i^*|, |V_i-V_i^*|)$ is small enough. From now on, we assume that \eqref{F8} holds for 
 all $i\in \cI$, and we pick a number $c>0$ such that 
 \be
 \label{F9}
 \sqrt{gH_i^*} -V_i^* >2c, \quad \forall i\in \cI . 
 \ee
 Note that \eqref{F6}-\eqref{F7} may be inverted as 
 \ba
 H_i&=& \left( \sqrt{H_i^*} +\frac{1}{4\sqrt{g}} (u_i-v_i)\right) ^2, \label{F11}\\  
 V_i&=& V_i^* + \frac{1}{2} (u_i + v_i). \label{F12}
 \ea
Substituting the values of $H_i,V_i$ in \eqref{F6}-\eqref{F7} yields
\ba
\mu _i &=& V_i^* -\sqrt{gH_i^*} + \frac{1}{4} (u_i + 3 v_i), \\
\lambda _i &=& V_i^* + \sqrt{gH_i^*} + \frac{1}{4} (3 u_i + v_i). 
\ea
 Combined with \eqref{F9}, this shows that 
 \[
 \max (|u_i|, |v_i| ) \le c \quad \Rightarrow \quad \mu _i  < -c < c < \lambda _i.  
 \]
The shallow water equations \eqref{sw1}-\eqref{sw2}, when expressed in terms of the Riemann invariants $u_i$ and $v_i$, read
\ba
\partial _t u_i + \lambda _i(u_i,v_i) \partial _x u_i =0,&& \quad t > 0,\ 0<x<l_i, \\
\partial _t v_i + \mu _i(u_i, v_i) \partial _x v_i=0,&& \quad t >  0,\  0<x<l_i. 
\ea  
Let us now turn our attention to the boundary conditions. Consider first a boundary condition associated with an active control, e.g.
\be
\frac{\mathrm {d}}{\mathrm{dt}} v_i(t, l_i) = -K \mathrm{sgn}( v_i(t, l_i) )|v_i(t, l_i)|^\gamma.  
\label{F15}
\ee
In practice, one would like to assign the value of $Q_i(t,l_i)=H_i(t,l_i)V_i(t,l_i)$ by using the output $H_i(t,l_i)$ only. Using \eqref{F7},
it is sufficient to set
\be
\label{F16}
Q_i (t,l_i)=H_i(t,l_i)\left( v_i (t,l_i) + 2\sqrt{gH_i(t,l_i)} + V_i^*-2\sqrt{g H_i^*}  \right),  
\ee
where $v_i$ solves \eqref{F15} together with the initial condition 
\be
\label{F17}
v_i(0,l_i)= V_i(0,l_i) - 2\sqrt{ gH_i(0,l_i) } -V_i^* +2\sqrt{gH_i^*} .
\ee
For a control applied to the initial point of the $i$-edge, we set 
\be
\label{FG1}
Q_i(t,0) = H_i(t,0) \big(u_i(t,0) - 2\sqrt{gH_i(t,0)}  +  V_i^* + 2\sqrt{gH_i^*} \big), 
\ee
where $u_i(.,0)$ solves
\ba
\label{FG2} 
\frac{d}{dt} u_i(t,0) &=& -K \mathrm{sgn} (u_i(t,0)) |u_i(t,0)|^\gamma ,\\
\label{FG3}
u_i(0,0) &=& V_i(0,0) + 2\sqrt{gH_i(0,0)} -V_i^* -2\sqrt{gH_i^*}. 
\ea
Consider next a boundary condition without any active control. For a simple node $n\in \cN _{S}$ and the corresponding edge $i\in \cI _n$, if 
$\varepsilon _{i,n}=0$ (i.e. the node $n$ is the initial point of the edge $i$), then $n=i$ and a natural boundary condition at the node $n$ is given by the relation
\be
\label{G1}
Q_i(t,0)=Q_i^*,
\ee 
that is 
\be
\label{G2}
F_i(u_i(t,0),v_i(t,0))=0
\ee
where 
\[
F_i(u,v) = \big( \sqrt{H_i^*} + \frac{1}{4\sqrt{g}} (u-v)\big) ^2 \big( V_i^* + \frac{1}{2} (u+v)\big) -H_i^*V_i^*.
\]
Since 
\[
F_i(0,0)=0 \text{ and } \frac{\partial F_i}{\partial u} (0,0)= \frac{1}{2}\sqrt{H_i^*} (\sqrt{H_i^*} +\frac{V_i^*}{\sqrt{g}} ) >0
\]
it follows from the Implicit Function Theorem that there exist a number $\delta _i>0$ and a function $h_i\in C^1(\R )$ with $h_i(0)=0$ such that
for $\max (|u|,|v|)<\delta _i$, 
\[
F_i(u,v) = 0 \quad \iff \quad u=h_i(v). 
\]
Thus \eqref{G2} may be written, at least locally, in the form
\[
u_i(t,0)= h_i(v_i(t,0)). 
\]
Finally, for a multiple node $n\in \cN _{M}$, if $i_0\in \cI _n$ is the only index such that $\varepsilon _{i_0,n}=0$ (i.e. $i_0=n$), then \eqref{F2}  
may be written 
\[
F_{i_0} (U_{i_0}(t,0), v_{i_0}(t,0), U(t),V(t))=0
\]
where $U(t)=(u_i(t,l_i))_{i\in \cI _n, i\ne i_0}$, $V(t) =(v_i(t,l_i))_{i\in \cI _n, i\ne i_0}$ and 
\begin{multline*}
F_{i_0}(u_{i_0},v_{i_0}, U,V) =
\big( \sqrt{H_{i_0}^*} + \frac{1}{4\sqrt{g}} (u_{i_0}-v_{i_0})\big) ^2 \big( V_{i_0}^* + \frac{1}{2} (u_{i_0}+v_{i_0})\big) \\
-\sum_{i\in \cI _n, i\ne i_0} \big( \sqrt{H_i^*} + \frac{1}{4\sqrt{g}} (u_i-v_i)\big) ^2 \big( V_i^* + \frac{1}{2} (u_i+v_i)\big). 
\end{multline*}  
Note that, by \eqref{F3}, $F_{i_0}(0,0,0,0)=0$ and 
\[
\frac{\partial F_{i_0}}{\partial u_{i_0}}(0,0,0,0)= \frac{1}{2} \sqrt{H^*_{i_0}} (\sqrt{H^*_{i_0}} + \frac{V_{i_0}^*}{\sqrt{g}})>0. 
\]
We may pick a number $\delta _{i_0}>0$ and a function $H_{i_0}$ of class $\Cc ^1$ around $0$ such that, if $|u_i|<\delta _{i_0}$ and
$|v_i|<\delta _{i_0}$ for all $i\in \cI  _n$, we have 
\[
F_{i_0}(u_{i_0},v_{i_0},U,V) = 0 \quad \iff \quad u_{i_0} =H_{i_0} (v_{i_0}, U,V).
\]
Replacing $(u_i,v_i)$ by $(u_i(t,l_i),v(t,l_i))$ for $i\ne i_0$ in $U,V$, we see that \eqref{F2} may be written, at least locally, in the  form
\be
u_{i_0}(t,0) = h_{i_0}(v_{i_0}(t,0),t) 
\label{ZYX}
\ee
where $h_{i_0}\in \Cc ^1(\R ^2)$ and $h_{i_0}(0,t)=0$ if $u_i(t,l_i)=v_i(t,l_i)=0$ for all $i\in \cI _n\setminus \{ i_0\}$. 

We are in a position to state our results for the regulation of water flow in channels. Consider first one channel ($\cN =\{1 ,2 \}$, $\cI =\{ 1\}$)
represented by the segment $[0,l_1]$.
\begin{thm}
\label{thm11}
(Two boundary controls) Assume that \eqref{F8} holds for $i=1$, and pick any $c>0$ as in \eqref{F9}. Then there exists a number $\delta >0$
such that for all $(H_{1,0},V_{1,0})\in \Lip  ([0,l_1])^2$ with 
\be
\label{G11}
\max (||H_{1,0} -H_1^*||_{W^{1,\infty}(0,l_1)}, ||V_{1,0} -V_1^*||_{W^{1,\infty}(0,l_1)} ) <\delta ,
\ee
there exists for any $T>0$ a unique solution $(H_1,V_1)\in \Lip ([0,T]\times [0,l_1])^2$ of \eqref{sw1}-\eqref{sw3} and \eqref{F15}-\eqref{FG3}.
Furthermore, there exists a function 
$t^*(H_1^*,V_1^*,c,\delta ,K,\gamma )$ with $\lim_{\delta \to 0} t^* =  0$ such that 
\be
\label{L1}
H_1(t,x)= H_1^*,\quad V_1(t,x) = V_1^*\qquad t\ge \frac{l_1}{c} + t^*,\ x\in (0,l_1).
\ee
Finally, the equilibrium point $(H_1^*,V_1^*)$ is stable in $\Lip ([0,1])^2$ for the system \eqref{sw1}-\eqref{sw3} and \eqref{F15}-\eqref{FG3}.
\end{thm}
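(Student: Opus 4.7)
\medskip

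\noindent\textbf{Proof plan for Theorem \ref{thm11}.} The strategy is to transform the shallow water system into a diagonal system in the Riemann invariants $(u_1,v_1)$ defined by \eqref{F6}-\eqref{F7}, rescale the spatial variable to reduce to the unit interval, and invoke Theorem \ref{main1}. Since \eqref{F11}-\eqref{F12} express $(H_1,V_1)$ smoothly in terms of $(u_1,v_1)$ near $(0,0)$, the system \eqref{sw1}-\eqref{sw2} becomes
\[
\partial_t u_1+\lambda_1(u_1,v_1)\partial_x u_1=0,\qquad \partial_t v_1+\mu_1(u_1,v_1)\partial_x v_1=0,
\]
with $\lambda_1,\mu_1$ of class $\Cc^\infty$ on a neighborhood of $(0,0)$, and the computation following \eqref{F12} combined with \eqref{F9} gives $\mu_1<-c<c<\lambda_1$ as soon as $\max(|u_1|,|v_1|)\le c$. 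After the change of variable $x\mapsto x/l_1$, the spatial domain becomes $[0,1]$, the characteristic speeds are divided by $l_1$ (so the uniform hyperbolicity constant becomes $c/l_1$), and the transit time estimate of Theorem \ref{main1} produces $1/(c/l_1)=l_1/c$, matching the factor in \eqref{L1}.

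Next I verify that all the assumptions of Theorem \ref{main1} hold. The boundary conditions \eqref{FG2} and \eqref{F15} are, by construction, exactly the finite-time stable ODEs \eqref{C1} for the Riemann invariants $u_1(t,0)$ and $v_1(t,l_1)$, while \eqref{FG3} and \eqref{F17} provide the correct initial values. The initial traces $(u_{1,0},v_{1,0})$ lie in $\text{Lip}([0,1])$ with
\[
\max(\|u_{1,0}\|_\infty,\|v_{1,0}\|_\infty)+\max(\|u_{1,0}'\|_\infty,\|v_{1,0}'\|_\infty)\le \kappa\,\delta,
\]
for some constant $\kappa=\kappa(H_1^*,V_1^*,g)$, thanks to \eqref{G11} and the smoothness of the change of variables. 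Choosing $C_1=\kappa\delta$ and $C_2=\kappa\delta$, the constants $M_1,M_2$ from \eqref{eq:BornVP}-\eqref{eq:BornVPGrad} remain uniformly bounded as $\delta\to 0$, the time $T=l_1/c+C_1^{1-\gamma}/((1-\gamma)K)$ stays bounded, while the left-hand side of \eqref{W1} tends to $0$; hence \eqref{W1} holds for $\delta$ small. Theorem \ref{main1} then delivers a unique global solution $(u_1,v_1)$ in the associated class $\mathcal{D}$, with $u_1(t,\cdot)=v_1(t,\cdot)\equiv 0$ for $t\ge T$.

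Transforming back through \eqref{F11}-\eqref{F12}, I obtain a Lipschitz solution $(H_1,V_1)$ of \eqref{sw1}-\eqref{sw3} and \eqref{F15}-\eqref{FG3} on $[0,T]\times[0,l_1]$ for every $T>0$, and setting $t^*:=C_1^{1-\gamma}/((1-\gamma)K)$ yields $t^*\to 0$ as $\delta\to 0$ and the extinction statement \eqref{L1}. For uniqueness, any other Lipschitz solution staying in a small enough neighborhood of $(H_1^*,V_1^*)$ pulls back, via the local $\Cc^\infty$-diffeomorphism $(H_1,V_1)\leftrightarrow (u_1,v_1)$, to a solution in $\mathcal{D}$, so uniqueness follows from the uniqueness clause of Theorem \ref{main1}. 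The stability of $(H_1^*,V_1^*)$ in $\text{Lip}([0,l_1])^2$ is then inherited from \eqref{stab1} by continuity of the change of variables.

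The only genuinely delicate step is the verification that the parameters can be chosen consistently: one must simultaneously bound the initial data in terms of $\delta$, keep $C_1$ inside the region where the change of variables is a diffeomorphism and where $\mu_1<-c<c<\lambda_1$ holds, and ensure \eqref{W1}. As observed above, these constraints are compatible because $T,M_1,M_2$ depend only on a fixed compact neighborhood of $(0,0)$ once $\delta$ is small, while $\max(KC_1^\gamma/c,C_2)=O(\delta^\gamma)$ vanishes. No additional estimate is required.
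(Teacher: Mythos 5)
Your argument is correct and follows essentially the same route as the paper: pass to the Riemann invariants via the local diffeomorphism \eqref{F6}--\eqref{F7}, apply Theorem \ref{main1} (the paper does so directly on the interval $(0,l_1)$ rather than rescaling to $(0,1)$, which is equivalent), and transform back, with $t^*=C_1^{1-\gamma}/((1-\gamma)K)$. The one technical point you only gesture at is that Theorem \ref{main1} requires \eqref{A2} on all of $\R^2$, whereas the Saint-Venant speeds satisfy $\mu_1<-c<c<\lambda_1$ only for $\max(|u_1|,|v_1|)\le c$; the paper resolves this by modifying $\lambda_1,\mu_1$ outside $[-c,c]^2$ and then checking a posteriori via \eqref{eq:direct} that the solution satisfies $\max(|u_1|,|v_1|)\le C_1<c$, so the modification is never seen.
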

\begin{proof} Noticing that the map $\Theta :(H_1,V_1)\to (u_1,v_1)$  defined along \eqref{F6}-\eqref{F7} 
is locally around $(H_1^*,V_1^*)$ a diffeomorphism of class $\Cc ^\infty$, the condition
\eqref{G11} implies \eqref{eq:BornInit}-\eqref{eq:BornInitGrad} for $C_1$ and $C_2$ as in Theorem \ref{main1} (applied actually on the interval $(0,l_1)$
rather than $(0,1)$), provided that $\delta <\delta _0$ is small enough. We modify the functions $\mu_1(u,v)$ and $\lambda _1(u,v)$ outside
$[-c,c]^2$ so that 
\[
\mu  _1 (u,v) \le -c < c \le \lambda _1 (u,v),\qquad (u,v)\in \R ^2. 
\] 
Let $(u_1,v_1)$ be the solution given by Theorem \ref{main1}, and let $(H_1,V_1):=\Theta ^{-1}(u_1,v_1)$. If $C_1$ is chosen sufficiently small, then 
we infer from  \eqref{eq:direct}  that 
\begin{eqnarray*}
\max (|u_1(t,x)|, |v_1(t,x)| ) \le C_1 < c,&& \quad t\ge 0,  \ 0<x<l_1, \\
\max (|H_1(t,x) - H_1^*|, |V_1(t,x)-V_1^*|) < \delta  ,&& \quad t\ge 0, \ 0<x<l_1.
\end{eqnarray*}
It follows that for all $T>0$, $(H_1,V_1)\in \Lip ([0,T]\times [0,l_1])^2$ is a solution of \eqref{sw1}-\eqref{sw3} and \eqref{F15}-\eqref{FG3} such that 
\eqref{L1} holds with $t^*=C_1^{1-\gamma } / ((1-\gamma ) K)$. Note that the range of $C_1$ in Theorem \ref{main1} depends on  
$H_1^*$, $V_1^*$ and $c$ through the constants $M_1$ and $M_2$, and that $(C_1,C_2)\to (0,0)$ as $\delta\to 0$. Thus 
$t^*\to 0$ as $\delta \to 0$ with $K$ and $\gamma$ kept constant.  The uniqueness of 
$(H_1,V_1)$ in the class $\Lip ([0,T]\times [0,l_1])^2$ for all $T>0$ follows at once from those of $(u_1,v_1)$ in the same class, as stated in Theorem \ref{main1}. 
The stability property follows from \eqref{stab1}. 
\end{proof}

If the control is active at one endpoint of the channel only, a finite-time stabilization may be derived as well. 
\begin{thm}
\label{thm12}
(One boundary control) Assume that \eqref{F8} holds for $i=1$, and pick any $c>0$ as in \eqref{F9}. Then there exists a number $\delta >0$
such that for all $(H_{1,0},V_{1,0})\in \Lip ([0,l_1])^2$ with 
\ba
\label{G110}
&&\max (||H_{1,0} -H_1^*||_{W^{1,\infty}(0,l_1)}, ||V_{1,0} -V_1^*||_{W^{1,\infty}(0,l_1)} ) <\delta ,\\
&&H_{1,0}(l_1)V_{1,0}(l_1) = H_1^*V_1^*, \label{G110bis}
\ea
there exists for any $T>0$ a unique solution $(H_1,V_1)\in \Lip ([0,T]\times [0,l_1])^2$ of \eqref{sw1}-\eqref{sw3},  \eqref{F15}-\eqref{F17} and \eqref{G1}.
Furthermore, there exists a function 
$t^*(H_1^*,V_1^*,c,\delta ,K,\gamma )$ with $\lim_{\delta \to 0}t^*=0$  such that 
\be
\label{LL1}
H_1(t,x)= H_1^*,\quad V_1(t,x) = V_1^*\qquad t\ge \frac{2l_1}{c} + t^*,\ x\in (0,l_1).
\ee
Finally, the equilibrium point $(H_1^*,V_1^*)$ is stable in $\Lip ([0,1])^2$ for the system \eqref{sw1}-\eqref{sw3}, \eqref{F15}-\eqref{F17} and \eqref{G1}.
\end{thm}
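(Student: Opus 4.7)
The plan is to mimic the proof of Theorem \ref{thm11}, replacing Theorem \ref{main1} by Theorem \ref{main2}. First, I would pass to the Riemann invariants $(u_1,v_1)$ via the diffeomorphism $\Theta:(H_1,V_1)\mapsto (u_1,v_1)$ defined in \eqref{F6}--\eqref{F7}, which is a local $\Cc^\infty$ diffeomorphism around $(H_1^*,V_1^*)$. Under $\Theta$ the shallow water system \eqref{sw1}--\eqref{sw2} becomes the diagonal system \eqref{K1}--\eqref{K2} on $(0,+\infty)\times (0,l_1)$, with $\lambda_1,\mu_1$ satisfying \eqref{A1}--\eqref{A2} after a harmless modification outside the neighborhood $[-c,c]^2$ of the origin (exactly as in the proof of Theorem \ref{thm11}).

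Next, I would translate the two boundary conditions. The active ODE \eqref{F15}--\eqref{F17} is precisely of the form \eqref{K4}--\eqref{K5}. For the passive side, the condition $Q_1(t,0)=Q_1^*$ in \eqref{G1} reads $F_1(u_1(t,0),v_1(t,0))=0$ with $\partial_u F_1(0,0)>0$, so by the Implicit Function Theorem it may be rewritten near the equilibrium as $u_1(t,0)=h_1(v_1(t,0))$ with $h_1\in \Cc^1(\R)$ and $h_1(0)=0$. This matches \eqref{K3} with the time-independent choice $h(v,t):=h_1(v)$; in particular $D_2=0$ and one may take $T_h=0$ in \eqref{K8}. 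The compatibility condition \eqref{G110bis} corresponds via $\Theta$ to $u_1(0,0)=h_1(v_1(0,0))$, i.e.\ to \eqref{K9}.

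With these identifications, choosing $\delta>0$ small enough in \eqref{G110} guarantees \eqref{eq:BornInit}--\eqref{eq:BornInitGrad} with $C_1,C_2$ as small as desired. Since $D_2=0$, the smallness conditions \eqref{K12}--\eqref{K13} amount to a bound on $C_1$ and $C_2$ that is automatically met for $\delta$ small. Theorem \ref{main2}, applied after the straightforward rescaling $x\mapsto x/l_1$ (which only replaces the lower bound $c$ in \eqref{A2} by $c/l_1$, and turns the extinction time $\tfrac{2}{c}+\cdots$ of Section~4 into $\tfrac{2l_1}{c}+t^*$, with $t^*=C_1^{1-\gamma}/((1-\gamma)K)$), then provides a unique global-in-time solution $(u_1,v_1)\in \mathcal{D}$ that vanishes for $t\ge \tfrac{2l_1}{c}+t^*$. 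Setting $(H_1,V_1):=\Theta^{-1}(u_1,v_1)$ yields the desired solution of \eqref{sw1}--\eqref{sw3}, \eqref{F15}--\eqref{F17}, \eqref{G1} in $\Lip([0,T]\times[0,l_1])^2$ for every $T>0$; the extinction property \eqref{LL1} follows from $u_1(t,\cdot)=v_1(t,\cdot)=0$ on the claimed time interval, and stability is inherited from \eqref{stab2} (applicable because $h_1=h$ is $t$-independent).

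Conceptually, the doubling of the extinction time relative to Theorem \ref{thm11} is expected: the $v$-wave released by the ODE at $x=l_1$ needs time $l_1/c$ to reach $x=0$, where it enters the $u$-equation through the passive relation $u=h_1(v)$; the resulting $u$-wave must then travel back to $x=l_1$, yielding the second factor $l_1/c$. The only technical point beyond Theorem \ref{thm11} is to control the amplification of the Lipschitz bound on $u$ caused by the coupling $u=h_1(v)$ rather than $u=0$; this is exactly what is encoded in the auxiliary constants $C_1'$, $C_3'$, $C_3''$ of Section~4, so the whole argument reduces to checking \eqref{K12}--\eqref{K13} for $\delta$ small, which is the main (and only mildly delicate) step.
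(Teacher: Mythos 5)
Your proposal is correct and follows essentially the same route as the paper, whose own proof of Theorem \ref{thm12} is simply the instruction to repeat the argument of Theorem \ref{thm11} with Theorem \ref{main2} in place of Theorem \ref{main1}; your write-up fills in exactly the intended details (the IFT reduction of \eqref{G1} to $u_1(t,0)=h_1(v_1(t,0))$ with $D_2=0$, the verification of \eqref{K12}--\eqref{K13} for small $\delta$, and the transfer back through $\Theta^{-1}$). The only cosmetic remark is that the compatibility condition \eqref{K9} lives at $x=0$, so the natural reading of \eqref{G110bis} is $Q_1(0,0)=Q_1^*$ rather than a condition at $x=l_1$, which is how you have implicitly (and correctly) interpreted it.
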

\begin{proof}
It is sufficient to proceed as for the proof of Theorem \ref{thm11}, and to use Theorem \ref{main2} (on the domain $[0,l_1]$ and with 
a control active at the final point only).
\end{proof}

A direct application of Theorem \ref{thm12} gives the following result for a chain of two channels ($\cN =\{1 ,2 , 3\}$, $\cI =\{ 1, 2\}$), 
for which there is no active control at the internal node.
\begin{Co}
(Two channels and two controls) Assume that \eqref{F8} holds for $i=1,2$ with $Q_1^*=Q_2^*$, and pick any $c>0$ as in \eqref{F9}. Then there exists a number $\delta >0$
such that for all $(H_{1,0},V_{1,0}, H_{2,0}, V_{2,0})\in \Lip ([0,l_1])^2\times \Lip ([0,l_2])^2$ with 
\ba
\label{G120}
&&\max (||H_{i,0} -H_i^*||_{W^{1,\infty}(0,l_i)}, ||V_{i,0} -V_i^*||_{W^{1,\infty}(0,l_i)} ) <\delta , \quad i=1,2,\\
&& H_{1,0}(l_1)V_{1,0}(l_1) = Q_1^* = Q_2^* = H_{2,0}(0)V_{2,0}(0),
\ea
there exists for any $T>0$ a unique solution $(H_i,V_i)\in \Lip ([0,T]\times [0,l_1])^2$ of \eqref{sw1}-\eqref{sw3} for $i=1,2$,  \eqref{F15}-\eqref{F17} for $i=2$, 
\eqref{FG1}-\eqref{FG3} for $i=1$, and 
\[
Q_1(t,l_1)=Q_1^*=Q_2^*=Q_2(t,0), \qquad t>0. 
\]
Furthermore, there exists a function 
$t^*(H_1^*,V_1^*,H_2^*,V_2^*,c,\delta ,K,\gamma )$ with $\lim_{\delta \to 0}t^*=0$  such that 
\be
\label{LL2}
H_i(t,x)= H_i^*,\quad V_i(t,x) = V_i^*\qquad t\ge \frac{2\max(l_1,l_2)}{c} + t^*,\ x\in (0,l_i), \ i=1,2.
\ee
\end{Co}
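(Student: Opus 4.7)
The key observation is that the prescribed constraint $Q_1(t,l_1)=Q_1^*=Q_2^*=Q_2(t,0)$ at the internal node completely decouples the two channels: each half of this equality is a passive boundary condition on a single channel involving no information from the other. So the plan is to apply Theorem \ref{thm12} separately to channel 1 and to channel 2, and combine the two conclusions.

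For channel 2, the situation is already exactly the one covered by Theorem \ref{thm12}: the active control \eqref{F15}-\eqref{F17} (with $i=2$) acts at $x=l_2$, while at $x=0$ we impose $Q_2(t,0)=Q_2^*$, which is precisely the passive condition \eqref{G1} for the index $i=2$. The initial-data compatibility $H_{2,0}(0)V_{2,0}(0)=Q_2^*$ is the analog of \eqref{G110bis}. Applying Theorem \ref{thm12} (after rescaling $[0,l_2]$) yields, for $\delta$ small enough, a unique Lipschitz solution $(H_2,V_2)$ on $[0,T]\times[0,l_2]$ that coincides with $(H_2^*,V_2^*)$ for $t\ge 2l_2/c+t^*$, for some $t^*(H_2^*,V_2^*,c,\delta,K,\gamma)\to 0$ as $\delta\to 0$.

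For channel 1, the control \eqref{FG1}-\eqref{FG3} acts at $x=0$ while the passive condition $Q_1(t,l_1)=Q_1^*$ is at $x=l_1$, which is the mirror image of the situation in Theorem \ref{thm12}. The reduction is handled by the change of variable $x\mapsto l_1-x$, which swaps $\lambda_1$ and $\mu_1$ (so the roles of $u_1$ and $v_1$ as well) and places the active control at the ``right'' endpoint and the passive condition at the ``left'' one. The compatibility \eqref{G110bis} becomes $H_{1,0}(l_1)V_{1,0}(l_1)=Q_1^*$, which is exactly what is assumed. Theorem \ref{thm12} then provides a unique Lipschitz solution $(H_1,V_1)$ reaching $(H_1^*,V_1^*)$ for $t\ge 2l_1/c+t^*$.

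Putting the two together, both channels are simultaneously at equilibrium for $t\ge 2\max(l_1,l_2)/c+t^*$, which gives \eqref{LL2}. Uniqueness in $\Lip([0,T]\times[0,l_i])^2$ follows channel by channel from the uniqueness part of Theorem \ref{thm12}, and the smallness of $t^*$ as $\delta\to 0$ is inherited from the corresponding statement in that theorem. There is essentially no genuine obstacle here since the internal boundary condition has been chosen to decouple the channels; the only point requiring a little care is to verify that the prescribed passive condition $Q_i=Q_i^*$ can be written in the form $u_i=h_i(v_i)$ so as to fit the framework of Theorem \ref{main2}, which is exactly the computation \eqref{G2}-onwards already carried out in Section 5 (the implicit function theorem applies because $\partial F_i/\partial u_i(0,0)>0$).
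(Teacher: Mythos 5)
Your proposal is correct and is essentially the paper's own argument: the paper dispatches this corollary with the single remark that it is ``a direct application of Theorem \ref{thm12}'', the point being exactly the decoupling you identify (the internal condition pins each channel to the constant $Q^*$ rather than to the other channel), with Theorem \ref{thm12} applied to channel 2 as stated and to channel 1 after the reflection $x\mapsto l_1-x$. Your additional remarks on the implicit-function reformulation $u_i=h_i(v_i)$ and on taking the maximum of the two extinction times are the right details to check, and they match what the paper has already set up in Section 5.
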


We can extend the above results to a network of open channels which is a tree. We assume that the incoming flows can be controlled at each
multiple node  (the outgoing flow being uncontrolled and deduced from the conservation of the flows). In terms of Riemann invariants, for the edge
with index $i$, the function $v_i$ is controlled at $x=l_i$ according to \eqref{F15}, while the function $u_i$ is controlled at $x=0$ according to \eqref{FG2}
only if the initial point of the edge is a simple node (otherwise, $u_i(t,0)$  is given by \eqref{ZYX}). 

The main result of this section is the following
\begin{thm}
\label{main5}
(Network of open channels) Consider a tree with $N$ nodes and $I=N-1$ edges.
Assume that \eqref{F8} holds for $i=1,...,I$, that \eqref{F3} holds, and pick any $c>0$ as in \eqref{F9}. 
Then there exists a number $\delta >0$
such that for all $(H_{1,0},V_{1,0}, ... ,H_{I,0},V_{I,0} )\in \Lip ([0,l_1])^2\times \cdots \times \Lip ([0,l_I])^2$ with 
\ba
\label{G130}
&&\max (||H_{i,0} -H_i^*||_{W^{1,\infty}(0,l_i)}, ||V_{i,0} -V_i^*||_{W^{1,\infty}(0,l_i)} ) <\delta ,\qquad i=1,...,I,\\
&&H_{n,0} (0,0)V_{n,0}(0,0)= \sum_{i\in \cI _n, i\ne n} H_{i,0}(0,l_i)V_{i,0}(0,l_i) ,\qquad \forall n\in \cN _M , \label{G130bis}
\ea
there exists for any $T>0$ a unique function 
$(H_1,V_1, ..., H_I,V_I)\in \Lip ([0,T]\times [0,l_1])^2\times \cdots\times \Lip ([0,T]\times [0,l_I])^2$ such that, for all $i=1,...,I$, 
\eqref{sw1}-\eqref{sw3} and \eqref{F15}-\eqref{F17} hold, and \eqref{FG1}-\eqref{FG3} hold 
if the initial point of the $i$-th edge is simple, while \eqref{F2} holds
if the initial point of the $i$-th edge is multiple. 
Furthermore, there exists a function 
$t^*(H_1^*,V_1^*,...,H_I^*,V_I^*,\delta , c,K,\gamma )$ with $\lim_{\delta \to 0}t^*=0$  such that 
\be
\label{LL3}
H_i(t,x)= H_i^*,\quad V_i(t,x) = V_i^*\qquad t\ge \frac{p\max_{1\le i\le I}l_i}{c} + t^*,\ x\in (0,l_i),\  i=1,...,I,
\ee
where $p$ denotes the depth of the tree. Finally, the equilibrium state $(H_i^*,V_i^*)_{1\le i\le I}$ is stable in 
$\Lip ([0,l_1])^2\times \cdots \times \Lip ([0,l_I])^2$ for the system.
\end{thm}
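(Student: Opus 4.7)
The strategy is to combine the fixed-point construction of Theorems \ref{main1}-\ref{main2}, now applied simultaneously to the $2I$ Riemann invariants on all edges, with an induction on the tree depth to propagate finite-time extinction inward from the boundary controls. I would first pass to Riemann invariants $(u_i, v_i)$ on each edge via the local diffeomorphism \eqref{F6}-\eqref{F7}, and modify $\lambda_i, \mu_i$ outside a small neighborhood of $(0, 0)$ so that \eqref{A2} holds globally. The boundary conditions then take three forms: \eqref{F15} at $x = l_i$ for every edge; \eqref{FG2} at $x = 0$ for every edge whose initial point is a simple node; and a relation $u_i(t, 0) = h_i(v_i(t, 0), t)$ of the form \eqref{ZYX} at $x = 0$ for every edge $i = n$ whose initial node $n \in \cN_M$, obtained by the Implicit Function Theorem applied to \eqref{F2} around equilibrium (the invertibility following from \eqref{F3}). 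The time-dependence of $h_i$ encodes the traces $(u_j(t, l_j), v_j(t, l_j))$ for $j \in \cI_n \setminus \{i\}$.

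For local well-posedness on $[0, T_0]$ with $T_0$ fixed large, I would introduce a product domain $\mathcal D = \prod_{i \in \cI} \mathcal D_i$, where each $\mathcal D_i$ is a Lipschitz ball analogous to the one of Theorems \ref{main1}-\ref{main2}, and define $\mathcal F$ componentwise: given $(\tilde u_j, \tilde v_j)_j \in \mathcal D$, let $\mathcal F((\tilde u, \tilde v))_i = (u_i, v_i)$ be the weak solutions (Proposition \ref{prop:weak}) of $\partial_t u_i + \lambda_i(\tilde u_i, \tilde v_i)\partial_x u_i = 0$ and $\partial_t v_i + \mu_i(\tilde u_i, \tilde v_i)\partial_x v_i = 0$, with boundary data as above in which every $u_j, v_j$ is replaced by the corresponding tilded function. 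Stability $\mathcal F(\mathcal D) \subset \mathcal D$ for $\delta$ small enough would follow from the estimates \eqref{eq:ES}-\eqref{eq:BornLip} combined with conditions analogous to \eqref{K12}-\eqref{K13}, while continuity of $\mathcal F$ would follow as in Lemma \ref{lem:ConvFaible} and the proof of Theorem \ref{main2}, using Proposition \ref{prop:regu}. Schauder's theorem then yields a fixed point, and uniqueness in $\mathcal D$ is obtained from the $L^2$-energy method of Theorems \ref{main1}-\ref{main2} summed over edges: the boundary contributions at each multiple node $n$ telescope thanks to \eqref{F3} together with the Lipschitz continuity of $h_n$, and can be absorbed using the sign structure in \eqref{A2}.

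For finite-time extinction I would argue by induction on the tree depth. Set $t_* := C_1^{1-\gamma}/((1-\gamma)K)$, which tends to $0$ with $\delta$. From \eqref{F15} and $\mu_i \le -c$, each $v_i$ vanishes on its whole edge for $t \ge t_* + l_i/c$. For each edge $i$, let $\ell(i)$ denote the greatest number of edges on a path from a simple node to the final node of $i$, so $1 \le \ell(i) \le p$. I would show by induction on $\ell(i) = k$ that $u_i \equiv 0$ on $\{t \ge t_* + k \max_j l_j / c\} \times [0, l_i]$. For leaf edges ($k = 1$), this is immediate from \eqref{FG2} and rightward propagation at speed $\ge c$. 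For $k \ge 2$, the edges $j$ incoming to the initial node $n$ of edge $i$ have $\ell(j) \le k - 1$, so by induction their traces $u_j(t, l_j), v_j(t, l_j)$ vanish for $t \ge t_* + (k-1)\max_j l_j / c$; combined with $v_i(t, 0) = 0$ for $t \ge t_* + l_i/c$, this forces $h_i(v_i(t, 0), t) = 0$ and hence $u_i(t, 0) = 0$ from $t_* + (k-1)\max_j l_j / c$ on, which propagates rightward to give the claim. Taking $k = p$ yields \eqref{LL3} with $t^* = t_*$; extending by the equilibrium for larger $t$ produces a global Lipschitz solution, and the $\Lip$-stability statement follows from \eqref{stab1}-\eqref{stab2} applied edge by edge.

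The main obstacle lies in closing the stability estimates in Step 2: the Lipschitz-in-$t$ bound for $u_i(t, 0) = h_i(v_i(t, 0), t)$ at a multiple node depends on the Lipschitz-in-$t$ bounds of the traces of the incoming edges at $x = l_j$, so the Lipschitz radii of the $\mathcal D_i$ appear on both sides of a coupled system of inequalities. A sufficiently small uniform choice of $C_1$ closes this system in the spirit of \eqref{K12}-\eqref{K13}, but this simultaneous closure, together with the verification that the compatibility condition \eqref{G130bis} ensures $u_i(0, 0) = h_i(v_i(0, 0), 0)$ at every multiple node so that no boundary layer forms at $t = 0$, is the delicate point.
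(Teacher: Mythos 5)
Your proposal is correct in outline and its finite-time extinction argument (induction on the distance $\ell(i)$ from the leaves, with $v_i$ extinguished first by the controlled ODE at $x=l_i$ and $u_i$ extinguished once the upstream traces and $v_i(t,0)$ vanish) matches the paper's. Where you genuinely diverge is in the existence--uniqueness step: you set up a single Schauder fixed point on the product domain $\prod_i \mathcal{D}_i$ over the whole network, whereas the paper argues by induction on the number of edges $I$. The paper's key structural observation is that the coupling at a multiple node is strictly one-directional: the boundary datum $u_{i_0}(t,0)=h_{i_0}(v_{i_0}(t,0),t)$ of the downstream edge depends on the traces $u_j(t,l_j),v_j(t,l_j)$ of the upstream edges, but nothing feeds back upstream (the conditions at $x=l_j$ are the autonomous ODEs \eqref{F15}). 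Hence the subtrees rooted at $\mathcal{R}'$ are completely decoupled from the last edge; one solves them first by the induction hypothesis, obtaining traces whose $\text{Lip}$-norms are small with $\delta$, and then solves the last edge by a direct application of Theorem \ref{main2} with $h(v,t)$ a \emph{known} function of time and $D_2=\|\partial_t h\|_\infty$ small. This cascade entirely sidesteps the ``delicate point'' you flag at the end: the system of Lipschitz inequalities you worry about is in fact triangular (each $C_3$-type constant depends only on those of strictly upstream edges), so it can be closed edge by edge from the leaves down rather than solved simultaneously. Your global fixed point can be made to work for the same reason, but it obscures this structure and forces you to re-prove the continuity and invariance estimates on the whole product domain, as well as a summed energy uniqueness argument (where, incidentally, the absorption of the node terms rests on the good sign of the outgoing boundary terms $-\lambda_j|\hat u_j(t,l_j)|^2$ and the Lipschitz bound on $h_n$, not on \eqref{F3}, which only enters in defining $h_n$ near the equilibrium). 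The paper's induction buys a shorter proof that reuses Theorems \ref{thm11} and \ref{main2} as black boxes; your approach is more symmetric and would generalize more readily to node conditions with genuine two-way coupling, at the price of having to close the coupled Lipschitz estimates explicitly.
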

\begin{proof}
The proof is done by induction on the number of edges $I\ge 1$. For $I=1$, the result was already proved in Theorem \ref{thm11}. 
Note that the norm $||(u_1,v_1)||_{L^\infty (\R ^+ ; \text{Lip} ([0,l_1])^2) } $ in Theorem \ref{thm11} is as small as desired if $\delta$ is small enough.
Let $I\ge 2$, and assume the result true for any tree with at most $I-1$ edges, with the norms
 $||(u_i,v_i)||_{L^\infty (\R ^+ ; \text{Lip} ([0,l_i])^2) } $ in the edges of the tree as small as desired if $\delta$ is small enough.  Pick any tree with $I$ edges.
 Recall that the root $\mathcal R$ is the node with index $N$, and that it is the final point of the edge of index $I=N-1$. Denote by $\mathcal R'$
the initial point of the edge of index $I$, i.e. the node of index $N-1$. 
Let $k = \# (\cI _{N-1} )$, and let us denote by $\cT _1$, ..., $\cT _{k-1}$ the subtrees of $\cT$ with $\mathcal R'$ as root. ($\mathcal R$ does not belong to 
any of them.) Note that the subsystem associated with any subtree $\cT_i$ 
is decoupled from the other subtrees and from the last edge of index $I$.
An application of the induction hypothesis on each subtree $\cT _i$, $1\le i\le k-1$, yields the existence (and uniqueness) of the functions $(H_i,V_i)$ for
$i=1,...,I-1$. Next, the existence and uniqueness of  $(H_I,V_I)$  follows at once from Theorem \ref{main2}. Indeed, the constant $D_2$ in Theorem  \ref{main2} may be taken 
as small as we want if  $\delta $ is sufficiently small, for the quantities $||\partial _t u_i(.,l_i)||_\infty$ and $||\partial _t v_i(.,l_i)||_\infty$ for $i\in {\mathcal I}_{N-1}\setminus \{ N-1\}$ 
may be taken arbitrarily small by the induction assumption. Furthermore, the norm $||(u_I,v_I)||_{L^\infty (\R ^+ ; \text{Lip} ([0,l_I])^2) } $ tends to 0 with $\delta$, by \eqref{ABC1}-\eqref{ABC2}. 
The condition \eqref{LL3} is obtained by an obvious induction on the depth of the tree. 
\end{proof}

\section*{Appendix: Proof of Proposition \ref{prop:Lip}.}
First, we introduce some extension operator $\Pi$ which maps a function $a:[0,T]\times [0,1] \to\R$ to a function
$\tilde a= \Pi (a):\R ^2\to \R$ defined as follows:
\begin{itemize}
\item for $0\le t\le T$
$$\tilde a(t,x)=\left\{ \begin{array}{ll} a(t,x) &\text{ if } x\in [0,1],\\ a(t,2-x) &\text{ if } x\in [1,2], \end{array}\right.$$
and $\tilde a(t,x)$ is 2-periodic in $x$ (i.e. $\tilde a(t,x+2)=\tilde a(t,x)$); 
\item for $t >T$, $\tilde a(t,x) = \tilde a(T,x)$ for all $x\in \R$;
\item  for $t < 0$, $\tilde a(t,x)= \tilde a(0,x)$ for all $x\in \R$;
\end{itemize}
It is easy to see that $\Pi$ is a (linear) operator from $\Cc ^0 (\Ot )$ to $\Cc ^0 (\R ^2)$ (resp. from 
$L^\infty (0,T;\text{Lip} ([0,1]))$ to $L^\infty (\R ;\text{Lip} (\R ))$ such that 
\ba
|| \Pi (a)||_{L^\infty( \R ^2 )} &=& ||a||_{\Cc  ^0(\Ot )}, \label{ext1}\\
|| \Pi (a)||_{L^\infty( \R ; \text{Lip} (\R) )} &=& ||a||_{L^\infty (0,T;\text{Lip} ( [0,1] ) )} , \label{ext2}\\
\Pi (a)(t,x)\ge c \ \  \forall (t,x)\in \R ^2 &\text{if }& a(t,x)\ge c\ \ \forall (t,x)\in\Ot . \label{ext3}
\ea
Let $a$ fulfill \eqref{A3}, and  let $\phi$ (resp. $\tilde \phi$) denote the flow associated with $a$ (resp. with $\tilde a = \pi (a)$).
Then $\tilde \phi$ is defined in $\R ^3$, and 
\be
\label{ext4}
\phi (s,t,x) =\tilde\phi (s,t,x)\qquad \forall (s,t,x)\in \text{Dom  } \phi.
\ee
Thus it is sufficient to prove that $\tilde\phi $ is $K$-Lipschitz on $[0,T]^2\times [0,1]$. To this end, pick any $(s_1,t_1,x_1),(s_2,t_2,x_2)\in [0,T]^2\times [0,1]$. 
Then 
\begin{eqnarray*}
&& |\tilde \phi (s_1,t_1,x_1)- \tilde\phi (s_2,t_2,x_2) | \\
&&\quad \le 
|\tilde \phi (s_1,t_1,x_1)- \tilde\phi (s_2,t_1,x_1) | +  |\tilde \phi (s_2,t_1,x_1)- \tilde\phi (s_2,t_2,x_1) | +  |\tilde \phi (s_2,t_2,x_1)- \tilde\phi (s_2,t_2,x_2) | \\
&&\quad =: I_1+I_2+I_3.
\end{eqnarray*}
First, 
\be
\label{WW1}
I_1 = \vert \int_{s_1}^{s_2} \partial _s \tilde \phi (\tau , t_1, x_1) d\tau \vert
= \vert \int_{s_1}^{s_2} \tilde a (\tau , \tilde \phi (\tau , t_1, x_1)) d\tau \vert  
\le || a ||_{\Cc ^0(\Ot ) } |s_1-s_2|,    
\ee
where we used \eqref{ext1}. 
For $I_2$, we notice that for all $s$ 
\begin{eqnarray*}
\vert \partial _s [\tilde\phi (s,t_1,x_1) -  \tilde \phi (s,t_2,x_1)  ] \vert &=& \vert \tilde a (s,\tilde \phi (s,t_1,x_1)) - \tilde a (s,  \tilde \phi (s,t_2,x_1)) \vert\\
&\le& L \vert \tilde\phi (s,t_1,x_1)- \tilde \phi (s,t_2,x_1) \vert 
\end{eqnarray*}
where we used \eqref{ext2}.
Gronwall's lemma combined to the estimate for $I_1$  yields then
\ba
\vert \tilde\phi (s_2,t_1,x_1) - \tilde \phi (s_2,t_2,x_1)   \vert &\le & \vert\tilde \phi (t_2,t_1,x_1)  - \tilde\phi (t_2,t_2,x_1)   \vert e^{L | s_2-t_2 | }  \nonumber \\
 &\le & \vert \tilde\phi (t_2,t_1,x_1) -  \tilde\phi (t_1,t_1,x_1)   \vert e^{L T } \nonumber \\
&\le& ||a||_{\Cc ^0 (\Ot ) } e^{LT} |t_1-t_2|. \label{WW2}
\ea
Finally, for $I_3$, we notice that for all $s$
 \begin{eqnarray*}
\vert \partial _s [\tilde\phi (s,t_2,x_1) - \tilde \phi (s,t_2,x_2)  ] \vert 
&=& \vert \tilde a (s,\tilde \phi (s,t_2,x_1)) - \tilde a (s,  \tilde \phi (s,t_2,x_2)) \vert\\
&\le& L \vert \tilde\phi (s,t_2,x_1)- \tilde \phi (s,t_2,x_2) \vert , 
\end{eqnarray*}
which, combined with Gronwall lemma, yields
\be
\vert \tilde\phi (s_2,t_2,x_1)-  \tilde \phi (s_2,t_2,x_2)   \vert \le   e^{L | s_2-t_2 | } |x_1-x_2| \le e^{LT} |x_1-x_2|.
 \label{WW3}
\ee
Then \eqref{WW0} follows at once from \eqref{WW1}-\eqref{WW3}.

\section*{Acknowledgements}
The authors were partially supported by the Agence Nationale de la Recherche, Project CISIFS,
grant ANR-09-BLAN-0213-02.

\end{document}